\documentclass[a4paper;11pt]{amsart}
\usepackage{mathrsfs}
 \usepackage[arrow,matrix]{xy}
\usepackage{amsmath,amssymb,amscd,bbm,amsthm,mathrsfs}
\newtheorem{thm}{Theorem}[section]
\newtheorem{lem}{Lemma}[section]
\newtheorem{cor}{Corollary}[section]
\newtheorem{prop}{Proposition}[section]
\newtheorem{rem}{Remark}[section]

\theoremstyle{definition}

 \setlength{\textwidth}{160mm} \addtolength{\hoffset}{-16mm}
\begin{document}
\numberwithin{equation}{section}

 \title[On the weighted $L^2$  estimate for the  $k$-Cauchy-Fueter operator and the  $k$-Bergman   kernel ]{
On the weighted $L^2$  estimate for the  $k$-Cauchy-Fueter operator    and the weighted  $k$-Bergman   kernel}
\author {Wei Wang}
\begin{abstract} The $k$-Cauchy-Fueter operators, $k=0,1,\ldots$,
are   quaternionic counterparts of the Cauchy-Riemann operator   in the theory of several complex variables. The weighted $L^2$ method  to solve Cauchy-Riemann  equation is   applied to find the canonical solution  to the  non-homogeneous
$k$-Cauchy-Fueter equation  in a weighted $L^2$-space,  by establishing the weighted $L^2$ estimate. The weighted $k$-Bergman space is the space of weighted $L^2$ integrable functions annihilated by the $k$-Cauchy-Fueter operator, as the counterpart  of the Fock space  of weighted $L^2$-holomorphic  functions on $\mathbb{C}^n$. We introduce the $k$-Bergman orthogonal projection to this closed subspace, which can be  nicely expressed  in terms of the   canonical solution operator,  and its matrix kernel function. We also find the  asymptotic decay for this  matrix kernel function.
\end{abstract}

\thanks{Department of Mathematics, Zhejiang University, Zhejiang 310027, PR China,
Email: wwang@zju.edu.cn.}
\thanks{Supported by National Nature Science
Foundation
  in China (No. 11571305)}
 \maketitle

\section{Introduction}
 The $k$-Cauchy-Fueter operators over   $\mathbb{R}^{4n}$
 \begin{equation*}\label{eq:quaternionic-complex-op}\begin{split}
 {\mathscr D}_0^{(k)}: C^\infty \left(\mathbb{R}^{4n},  \odot^{k }\mathbb{C}^{2 }  \right) &
\longrightarrow C^\infty \left(\mathbb{R}^{4n}, \odot^{k-1
}\mathbb{C}^{2 } \otimes  \mathbb{C}^{2n}  \right) ,\end{split}
\end{equation*}$k=0,1,\ldots$,
 are  quaternionic counterparts of the Cauchy-Riemann operator $\overline{\partial}$  in the theory of several complex variables, where $\odot^{p}\mathbb{C}^{2 } $ is the $p$-th symmetric tensor product of $ \mathbb{C}^{2 } $. If we write a vector  in the quaternionic space $\mathbb H^{n}$ as
$\mathbf{q}=(\mathbf{q}_{0},\ldots,\mathbf{q}_{n-1})$,
  the  usual {\it Cauchy-Fueter
operator} is
defined as
\begin{equation*} \label{eq:CF}  \mathscr  D  :C^1( \mathbb H^{n}, \mathbb H )\rightarrow C (\mathbb H^{n}, \mathbb H^{n}),\qquad \qquad   \mathscr D f=\left(\begin{array}{c} \overline\partial_{
{\mathbf{q}}_0}f\\\vdots\\  \overline\partial_{ {\mathbf{q}}_{n-1  }}f\end{array}\right),
\end{equation*}
for $f\in C^1(\mathbb H^{n}, \mathbb H )$,  where $
 \overline{\partial}_{\mathbf{q}_{l}}
 =\partial_{x_{4l+1}}+\mathbf{i}\partial_{x_{4l+2}}+\mathbf{j}\partial_{x_{4l+3}}+\mathbf{k}\partial_{x_{4l+4}},
$ if we write $\mathbf{q}_{l}=x_{4l+1}+x_{4l+2}\mathbf{i}+x_{4l+3}\mathbf{j}+x_{4l+4}\mathbf{k}\in \mathbb H$,
$l=0,1,\ldots,n-1$. It is known that the  Cauchy-Fueter
operator coincides with the $1$-Cauchy-Fueter operator \cite{KW}. In the quaternionic case, we have a family of operators  acting on  $ \odot^{k} \mathbb{C}^2 $-valued functions, $k=0,1,\ldots$, because  SU$(2)$ as the group of unit quaternions  has  a family of irreducible representations $ \odot^{k} \mathbb{C}^2 $,  while $S^1$ as the group of unit complex numbers  has only one irreducible representation.
 The $k$-Cauchy-Fueter operators over  $\mathbb{R}^{4 }$ also have the origin in physics: they are
the elliptic version of {\it spin $  k/ 2$  massless field operators} over the
Minkowski space (cf. e.g. \cite{CMW} \cite{EPW} \cite{PR1} \cite{PR2}):
    $\mathscr D_0^{(1)}\phi=0$ corresponds to the Dirac-Weyl equation    whose solutions correspond to neutrinos;
  $\mathscr D_0^{(2)}\phi=0$ corresponds to  the  Maxwell equation   whose  solutions correspond to
photons;
  $\mathscr D_0^{(3)}\phi=0$ corresponds to   the Rarita-Schwinger
equation;
   $\mathscr D_0^{(4)}\phi=0$ corresponds to   linearized Einstein's equation
 whose  solutions correspond to  weak gravitational fields; etc..

 To  develop the function theory of several
quaternionic variables,  we need to solve  the  {\it non-homogeneous
$k$-Cauchy-Fueter equation}:
\begin{equation}\label{eq:w-kCF}
    {\mathscr D}_0^{(k)}u=f,
 \end{equation} where $u$ is $\odot^{k }\mathbb{C}^{2 } $-valued and $f$ is $\odot^{k-1
}\mathbb{C}^{2 } \otimes  \mathbb{C}^{2n}   $-valued. Under the identification
\begin{equation}\label{eq:iso}
   \odot^{k }\mathbb{C}^{2 }\simeq \mathbb{C}^{k+1},\qquad \odot^{k-1
}\mathbb{C}^{2 } \otimes  \mathbb{C}^{2n} \simeq \mathbb{C}^{2kn },
\end{equation}  ${\mathscr D}_0^{(k)}$ is a $2kn\times  (k+1)$-matrix valued differential operator of the first order with constant coefficients.
The equation (\ref{eq:w-kCF}) is overdetermined  and   its compatibility
condition is that $f$ is {\it
${\mathscr D}_1^{(k)}$-closed}, i.e.
 \begin{equation}\label{eq:compatibility}
    {\mathscr D}_1^{(k)}f=0,\end{equation}  where  ${\mathscr D}_1^{(k)}$ is    the second operator in  the $k$-Cauchy-Fueter complex:
\begin{equation}\label{eq:quaternionic-complex-diff}\begin{split}
0\rightarrow C^\infty \left(\mathbb{R}^{4n},  \mathscr V_0  \right) &
\xrightarrow{{\mathscr D}_0^{(k)}}C^\infty \left(\mathbb{R}^{4n}, \mathscr V_1 \right)
\xrightarrow{{\mathscr D}_1^{(k)}}  C^\infty \left(\mathbb{R}^{4n}, \mathscr V_2 \right) \xrightarrow{{\mathscr D}_2^{(k)}} \cdots  ,\end{split}
\end{equation}and
\begin{equation}\label{eq:quaternionic-complex-diff-V}\begin{split}
\mathscr V_0:=  \odot^{k }\mathbb{C}^{2 },  \qquad\mathscr V_1:=\odot^{k-1
}\mathbb{C}^{2 } \otimes  \mathbb{C}^{2n}, \qquad\mathscr V_2:= \odot^{k-2
}\mathbb{C}^{2 } \otimes \wedge^2 \mathbb{C}^{2n} .\end{split}
\end{equation} Here $\wedge^{2} \mathbb{C}^{2n} $ is the $2$-th exterior product of $ \mathbb{C}^{2n} $.
 These complexes
  play  the role  of Dolbeault complex   in  several complex variables,  and are now explicitly known \cite{Wa10} (cf. also \cite{Ba} \cite{bures}  \cite{bS} \cite{CSS}
  \cite{CSSS}).

  The author \cite{Wa08} \cite{Wa10} solved the  non-homogeneous
$k$-Cauchy-Fueter equation  in  $L^2$-space over   ${\mathbb R}^{4n}$ by using the method of classical harmonic analysis, and deduced Hartogs' phenomenon and integral representation formulae.
 In this paper, the weighted   $L^2$ method  to solve  the  $\overline{\partial}$  equation on $\mathbb{C}^n$ (see e.g. \cite{DA} \cite{H} \cite{MO} and references therein) is extended to solve  the  non-homogeneous
$k$-Cauchy-Fueter equation (\ref{eq:w-kCF}).   The  $L^2$ method is a general method to deal with overdetermined systems of linear differential equations when we can establish the necessary $L^2$ estimate, e.g. it is applied to
the  Dirac operator in Clifford analysis \cite{LCP}.
 The reason  to consider the weighted $L^2$-space  is as follows.
$f$ is called {\it $k$-regular} if ${\mathscr D}_0^{(k)}f=0$ in the sense of distributions.
It is known that the space of $k$-regular polynomials are   infinite dimensional (cf. \cite{KW}), and such functions are $L^2$-integrable with   Gaussian weight. This is similar to complex analysis, where one consider the space of $L^2$-integrable holomorphic functions with Gaussian  weight,     called {\it Fock space}. Without a weight,  a $L^2$-integrable holomorphic (or $k$-regular) function   must vanish.
Given a nonnegative function $\varphi$, called a {\it weighted function}, consider the Hilbert  space $L_\varphi^2( \mathbb{R}^{4n}, \mathbb{C}   )$   with the  weighted inner product
\begin{equation*}
   (u,v)_\varphi:=\int_{ \mathbb{R}^{4n}} u\overline{v}e^{-2\varphi}dV,
\end{equation*}
where $dV$ is the Lebegues  measure on $\mathbb{R}^{4n}$. For a complex linear space $\mathscr V$ with an inner product $\langle \cdot,\cdot\rangle$ (e.g. $\mathscr V=\odot^{k }\mathbb{C}^{2 } $ or $\odot^{k-1
}\mathbb{C}^{2 } \otimes  \mathbb{C}^{2n} $),  we   define $L_\varphi^2( \mathbb{R}^{4n}, \mathscr  V  )$ with the  weighted inner product
\begin{equation*}
   \langle f,g\rangle_\varphi:=\int_{ \mathbb{R}^{4n}} \langle f,g\rangle e^{-2\varphi}dV,
\end{equation*}and the  weighted norm $\|f\|_\varphi:=\langle f,f\rangle_\varphi^{\frac 12}$. The {\it weighted $k$-Bergman space} with respect to weight $\varphi=|x|^2$ is then defined as
 \begin{equation*}
    A^2_{(k)}(\mathbb{R}^{4n}, \varphi):=\left\{f\in L_\varphi^2( \mathbb{R}^{4n},  \odot^{k
}\mathbb{C}^{2 } );{\mathscr D}_0^{(k)}f=0\right\}.
 \end{equation*} It is infinite dimensional \cite{KW} because $k$-regular polynomials are integrable with respect to this weight.

In the sequel, we will drop the superscript for fixed $k$ for simplicity.  \begin{equation}\label{eq:quaternionic-complex-diff-L2}
   L_\varphi^2( {\mathbb R}^{4n},  \mathscr V_0 )\xrightarrow{{\mathscr D}_0}L_\varphi^2( {\mathbb R}^{4n},\mathscr V_1)\xrightarrow{{\mathscr D}_1}L_\varphi^2({\mathbb R}^{4n},\mathscr V_2)
\end{equation} is a complex, i.e. for any $u\in {\rm Dom} ({\mathscr D}_0 )$,
   \begin{equation*}
      {\mathscr D}_0u\in {\rm Dom}  ({\mathscr D}_1) \qquad {\rm and }\qquad {\mathscr D}_1{\mathscr D}_0u=0.
   \end{equation*}  Then if $f\in  L_\varphi^2( \mathbb{R}^{4n},  \mathscr V_1 )$ is ${\mathscr D}_1$-closed,
  the  nonhomogeneous  $k$-Cauchy-Fueter equation  (\ref{eq:w-kCF})
     has at most one solution $u\in {\rm Dom} ({\mathscr D}_0 ) $ orthogonal to $A^2_{(k)}(\mathbb{R}^{4n},\varphi)$. If it exists, it is called the {\it canonical solution} to the nonhomogeneous      $k$-Cauchy-Fueter equation (\ref{eq:w-kCF}).
 Consider the {\it associated Laplacian operator} $\Box_{\varphi }: L_\varphi^2( \mathbb{R}^{4n},  \mathscr V_1 )\longrightarrow L_\varphi^2( \mathbb{R}^{4n},  \mathscr V_1 )$ given by
\begin{equation*}
   \Box_{\varphi }:={\mathscr D}_0{\mathscr D}_0^*  + {\mathscr D}_1^*{\mathscr D}_1.
\end{equation*}

 \begin{thm} \label{thm:canonical} Suppose that $\varphi(x)=|x|^2$ and $k=2,3,\ldots$. Then
     \item[(1)]{ $\Box_\varphi$ has a bounded, self-adjoint and non-negative inverse $N_\varphi$ such that  }
       \begin{equation*}
          \|N_\varphi f\|_\varphi\leq \frac 14\|f\|_\varphi, \qquad {\rm for \hskip 3mm any }\quad f\in L_\varphi^2( \mathbb{R}^{4n},   \mathscr V_1 ).
       \end{equation*}

   \item[(2)]{    $ {\mathscr D}_0^*N_\varphi f $
        is  the canonical solution operator to the nonhomogeneous     $k$-Cauchy-Fueter equation (\ref{eq:w-kCF}), i.e. if $f\in {\rm Dom} ({\mathscr D}_1 ) $ is ${\mathscr D}_1$-closed, then
         \begin{equation*}
            {\mathscr
 D}_0 {\mathscr D}_0^*N_\varphi f=f \end{equation*}and $   {\mathscr D}_0^*N_\varphi f $ orthogonal to $A^2_{(k)}(\mathbb{R}^{4n},\varphi)$. Moreover,
        \begin{equation}\label{eq:canonical-est}
           \| {\mathscr D}_0^*N_\varphi f\|_\varphi\leq \frac 12 \|  f\|_\varphi,\qquad \| {\mathscr D}_1N_\varphi f\|_\varphi\leq  \frac 12\|  f\|_\varphi.
        \end{equation}
        }      \end{thm}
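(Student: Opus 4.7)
The plan is to follow the Hörmander--Kohn $L^2$ method adapted to the $k$-Cauchy-Fueter complex. The cornerstone is a weighted \emph{basic estimate}
\begin{equation*}
 4\|u\|_\varphi^2\;\leq\;\|\mathscr D_0^* u\|_\varphi^2+\|\mathscr D_1 u\|_\varphi^2
\end{equation*}
valid for every $u$ in a dense subspace of $\mathrm{Dom}(\mathscr D_0^*)\cap\mathrm{Dom}(\mathscr D_1)\subset L_\varphi^2(\mathbb R^{4n},\mathscr V_1)$, typically smooth compactly supported sections. I would derive it by a Bochner--Kodaira--Hörmander type identity: first compute the weighted adjoints as $\mathscr D_j^*=\mathscr D_j^{\circ}+M_j$, where $\mathscr D_j^{\circ}$ is the unweighted formal adjoint and $M_j$ is a zero-order multiplication operator built from $\nabla\varphi$; then expand $\|\mathscr D_0^* u\|_\varphi^2+\|\mathscr D_1 u\|_\varphi^2$ and integrate by parts so that it is rewritten as a non-negative ``kinetic'' contribution (involving first derivatives of $u$) plus a curvature term coming from commutators with $M_0,M_1$. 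Because $\varphi=|x|^2$ has constant Hessian, the curvature term becomes a constant bundle endomorphism on $\mathscr V_1$; decomposing $\mathscr V_1=\odot^{k-1}\mathbb C^2\otimes\mathbb C^{2n}$ via the symbols of $\mathscr D_0$ and $\mathscr D_1$ should show that its smallest eigenvalue equals $4$. The hypothesis $k\geq 2$ enters precisely so that the target $\mathscr V_2=\odot^{k-2}\mathbb C^2\otimes\wedge^2\mathbb C^{2n}$ is non-trivial and the complex (\ref{eq:quaternionic-complex-diff}) genuinely has two steps.

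With the basic estimate in hand, part (1) follows by a standard quadratic-form argument. Set
\begin{equation*}
 Q(u,v):=\langle \mathscr D_0^* u,\mathscr D_0^* v\rangle_\varphi + \langle \mathscr D_1 u,\mathscr D_1 v\rangle_\varphi
\end{equation*}
on the Hilbert space $\mathcal H:=\mathrm{Dom}(\mathscr D_0^*)\cap\mathrm{Dom}(\mathscr D_1)$ endowed with the graph norm; the basic estimate makes $Q$ a closed coercive form with $Q(u,u)\geq 4\|u\|_\varphi^2$. For $f\in L_\varphi^2(\mathbb R^{4n},\mathscr V_1)$ the antilinear map $v\mapsto\langle f,v\rangle_\varphi$ on $(\mathcal H,Q)$ has norm $\leq\tfrac12\|f\|_\varphi$, so Riesz representation produces a unique $N_\varphi f\in\mathcal H$ with $Q(N_\varphi f,v)=\langle f,v\rangle_\varphi$ for every $v\in\mathcal H$; this $N_\varphi f$ lies in $\mathrm{Dom}(\Box_\varphi)$ and satisfies $\Box_\varphi N_\varphi f=f$. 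Self-adjointness and non-negativity of $N_\varphi$ are inherited from symmetry and positivity of $Q$. Substituting $v=N_\varphi f$ and invoking the basic estimate gives $4\|N_\varphi f\|_\varphi^2\leq Q(N_\varphi f,N_\varphi f)=\langle f,N_\varphi f\rangle_\varphi\leq\|f\|_\varphi\|N_\varphi f\|_\varphi$, whence $\|N_\varphi f\|_\varphi\leq\tfrac14\|f\|_\varphi$.

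For part (2), set $u:=\mathscr D_0^* N_\varphi f$ and assume $\mathscr D_1 f=0$. Applying $\mathscr D_1$ to $\mathscr D_0\mathscr D_0^* N_\varphi f+\mathscr D_1^*\mathscr D_1 N_\varphi f=f$ and using $\mathscr D_1\mathscr D_0=0$ from the complex (\ref{eq:quaternionic-complex-diff}) yields $\mathscr D_1\mathscr D_1^*\mathscr D_1 N_\varphi f=0$; pairing with $\mathscr D_1 N_\varphi f$ forces $\mathscr D_1^*\mathscr D_1 N_\varphi f=0$, so $\mathscr D_0 u=f$. Orthogonality of $u$ to $A_{(k)}^2(\mathbb R^{4n},\varphi)$ is immediate: for $g\in\ker\mathscr D_0$ one has $\langle u,g\rangle_\varphi=\langle N_\varphi f,\mathscr D_0 g\rangle_\varphi=0$. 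Finally,
\begin{equation*}
 \|\mathscr D_0^* N_\varphi f\|_\varphi^2+\|\mathscr D_1 N_\varphi f\|_\varphi^2=\langle\Box_\varphi N_\varphi f,N_\varphi f\rangle_\varphi=\langle f,N_\varphi f\rangle_\varphi\leq\tfrac14\|f\|_\varphi^2,
\end{equation*}
so each summand is at most $\tfrac14\|f\|_\varphi^2$, giving (\ref{eq:canonical-est}).

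The principal obstacle is the basic estimate itself. Pinning down the sharp constant $4$ demands a careful symbol-level Bochner--Kodaira computation for the $k$-Cauchy-Fueter complex, together with the representation-theoretic decomposition of $\odot^{k-1}\mathbb C^2\otimes\mathbb C^{2n}$ under the simultaneous action of the symbols of $\mathscr D_0$ and $\mathscr D_1$. Both terms $\|\mathscr D_0^* u\|_\varphi^2$ and $\|\mathscr D_1 u\|_\varphi^2$ must genuinely enter, which is why $k\geq 2$ is required. Extending the a priori estimate from compactly supported smooth sections to all of $\mathrm{Dom}(\mathscr D_0^*)\cap\mathrm{Dom}(\mathscr D_1)$ by a graph-norm density argument with Friedrichs mollifiers is routine by comparison.
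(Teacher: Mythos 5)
Your overall architecture and all of part (2) coincide with the paper's proof: the cornerstone estimate you invoke is exactly Theorem \ref{thm:L2}, which the paper states and proves separately (Sections 2--3) and which you may simply quote here (your Bochner--Kodaira sketch of it, with the claimed curvature eigenvalue $4$, is not a proof, but that theorem is not the statement under review); and your derivation of ${\mathscr D}_1^*{\mathscr D}_1N_\varphi f=0$, hence ${\mathscr D}_0{\mathscr D}_0^*N_\varphi f=f$, the orthogonality to $A^2_{(k)}(\mathbb{R}^{4n},\varphi)$, and the bound (\ref{eq:canonical-est}) are the paper's argument verbatim. Where you genuinely differ is part (1): you set up a closed coercive form $Q$ on $\mathcal H=\mathrm{Dom}({\mathscr D}_0^*)\cap\mathrm{Dom}({\mathscr D}_1)$ and apply Riesz/Lax--Milgram there, whereas the paper first proves that $\Box_\varphi$ itself is self-adjoint (Proposition \ref{prop:self-adjoint}, via von Neumann's theorem applied to $(I+{\mathscr D}_0{\mathscr D}_0^*)^{-1}+(I+{\mathscr D}_1^*{\mathscr D}_1)^{-1}-I$), deduces from $4\|g\|_\varphi\leq\|\Box_\varphi g\|_\varphi$ that $\Box_\varphi$ is injective with dense range, and applies Riesz representation to the functional $\Box_\varphi g\mapsto\langle f,g\rangle_\varphi$ on that range, so that $N_\varphi f\in\mathrm{Dom}(\Box_\varphi^*)=\mathrm{Dom}(\Box_\varphi)$ is immediate from self-adjointness.

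The gap in your route is precisely the sentence asserting that the Riesz solution of $Q(N_\varphi f,v)=\langle f,v\rangle_\varphi$ ``lies in $\mathrm{Dom}(\Box_\varphi)$ and satisfies $\Box_\varphi N_\varphi f=f$'': this is the nontrivial functional-analytic point and you give no argument for it. Membership in $\mathrm{Dom}(\Box_\varphi)$ means ${\mathscr D}_0^*N_\varphi f\in\mathrm{Dom}({\mathscr D}_0)$ and ${\mathscr D}_1N_\varphi f\in\mathrm{Dom}({\mathscr D}_1^*)$, i.e. boundedness in $\|v\|_\varphi$ of $v\mapsto\langle{\mathscr D}_0^*N_\varphi f,{\mathscr D}_0^*v\rangle_\varphi$ over \emph{all} $v\in\mathrm{Dom}({\mathscr D}_0^*)$ (and of the analogous functional over $\mathrm{Dom}({\mathscr D}_1)$); such $v$ need not lie in $\mathcal H$, and even for $v\in\mathcal H$ the weak identity only controls the sum of the two terms of $Q$, not each one separately. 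The standard repair uses the complex structure: for $v\in\mathrm{Dom}({\mathscr D}_0^*)$ write $v=v_1+v_2$ with $v_1\in\ker{\mathscr D}_1$ and $v_2\perp\ker{\mathscr D}_1\supset\overline{\mathcal{R}({\mathscr D}_0)}$ (which follows from (\ref{eq:D1D0})), so that ${\mathscr D}_0^*v_2=0$, $v_1\in\mathcal H$ with ${\mathscr D}_1v_1=0$, and $\langle{\mathscr D}_0^*N_\varphi f,{\mathscr D}_0^*v\rangle_\varphi=Q(N_\varphi f,v_1)=\langle f,v_1\rangle_\varphi$ is bounded by $\|f\|_\varphi\|v\|_\varphi$; the ${\mathscr D}_1$-functional is handled symmetrically by splitting along $\ker{\mathscr D}_0^*$. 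Likewise your claim that self-adjointness and non-negativity of $N_\varphi$ are ``inherited from $Q$'' silently relies on identifying the self-adjoint operator associated with the closed form $Q$ with $\Box_\varphi$ on its stated domain, which is the same missing step; alternatively you can simply import the paper's Proposition \ref{prop:self-adjoint} and follow its dense-range argument. Once this step is supplied, your part (1), including $\|N_\varphi f\|_\varphi\leq\frac14\|f\|_\varphi$, and your part (2) are correct.
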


        The key step to prove this theorem is to establish the following   weighted $L^2$ estimate.
         \begin{thm}\label{thm:L2}
   Suppose that $\varphi(x)=|x|^2$ and $k=2,3,\ldots$. Then
    \begin{equation}\label{eq:L2-n}
 4 \left \| f \right\|^2_{  \varphi  }  \leq       \left\| {\mathscr D}_0^*f  \right\|^2_{\varphi } + \left\|{\mathscr D}_1f \right\|^2_{\varphi }
    \end{equation} for any $f\in {\rm Dom} ({\mathscr D}_0^*) \cap  {\rm Dom} ( {\mathscr D}_1)$.
   \end{thm}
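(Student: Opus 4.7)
The plan is to adapt H\"ormander's weighted $L^2$ identity, developed for $\overline{\partial}$ on $\mathbb{C}^n$, to the $k$-Cauchy-Fueter complex (\ref{eq:quaternionic-complex-diff-L2}). By a standard density argument (the Schwartz space $\mathcal{S}(\mathbb{R}^{4n},\mathscr V_1)$ is a core for $\mathscr D_0^*$ and $\mathscr D_1$ under the Gaussian weight, using standard Friedrichs mollifier and cut-off techniques on $\mathbb{R}^{4n}$), it suffices to prove (\ref{eq:L2-n}) for Schwartz $f$. Writing $T_0$ for the formal (unweighted) adjoint of $\mathscr D_0$, conjugation by $e^{-\varphi}$ yields the pointwise decomposition $\mathscr D_0^* = T_0 + S_\varphi$, where $S_\varphi$ is the zero-order operator obtained by contracting the principal symbol of $\mathscr D_0$ against $\nabla\varphi = 2x$.

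I would next derive a Bochner--Kodaira--Nakano type identity for this complex. Expanding
\begin{equation*}
\|\mathscr D_0^* f\|_\varphi^2 = \|T_0 f\|_\varphi^2 + 2\,\mathrm{Re}\,\langle T_0 f, S_\varphi f\rangle_\varphi + \|S_\varphi f\|_\varphi^2,
\end{equation*}
integration by parts in the cross term produces a commutator between $\mathscr D_0$ and multiplication by the linear function $2x$; the key feature of the Gaussian weight is that this commutator is \emph{pointwise} on $\mathscr V_1$, with coefficients given by the constant Hessian $\nabla^2(|x|^2)=2\,\mathrm{Id}$. Adding $\|\mathscr D_1 f\|_\varphi^2$ and invoking the complex relation $\mathscr D_1\mathscr D_0 = 0$ to cancel indefinite cross terms, one arrives at an identity of the schematic form
\begin{equation*}
\|\mathscr D_0^* f\|_\varphi^2 + \|\mathscr D_1 f\|_\varphi^2 = (\text{nonnegative unweighted quadratic terms}) + \langle Q f, f\rangle_\varphi,
\end{equation*}
where $Q$ is a fiberwise endomorphism of $\mathscr V_1$ built from the principal symbols of $\mathscr D_0,\mathscr D_1$ and from $\nabla^2\varphi$.

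Dropping the nonnegative unweighted terms reduces the desired inequality to the linear-algebra statement $Q \geq 4\,\mathrm{Id}_{\mathscr V_1}$ on each fiber. I would verify this using the explicit matrix-valued expressions for $\mathscr D_0$ and $\mathscr D_1$ available under the identifications (\ref{eq:iso}) and (\ref{eq:quaternionic-complex-diff-V}), together with the SU$(2)$-equivariant decomposition of $\mathscr V_1=\odot^{k-1}\mathbb{C}^2\otimes\mathbb{C}^{2n}$. The hypothesis $k\geq 2$ enters precisely here: only then is $\mathscr D_1$ nontrivial, so that the two contributions to $Q$---one from $\mathscr D_0\mathscr D_0^*$, the other from $\mathscr D_1^*\mathscr D_1$---can combine to the sharp constant $4$; for $k=1$ the second contribution is absent. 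This final representation-theoretic computation on symmetric tensor products of $\mathbb{C}^2$, tracking the exact value of the constant, is the step I expect to be the main technical obstacle.
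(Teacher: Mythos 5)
Your overall strategy is the paper's strategy -- reduce to compactly supported smooth $f$, expand $\left\|\mathscr D_0^*f\right\|_\varphi^2=\langle \mathscr D_0\mathscr D_0^*f,f\rangle_\varphi$, integrate by parts, and exploit that for $\varphi=|x|^2$ the commutators $[Z^{A'}_B,\delta^A_{B'}]=2Z^{A'}_B\overline{Z^{B'}_A}\varphi$ are constant ($=8\delta_{AB}\delta_{A'B'}$, Lemma \ref{lem:assumption}) -- but the proposal stops exactly where the real work begins, and the schematic identity you posit is not the one that can actually be established. The missing idea is the treatment of the \emph{symmetric} (rather than exterior) form indices: in $\langle\mathscr D_0\mathscr D_0^*f,f\rangle_\varphi$ the symmetrisation over the $k$ primed indices must be split, keeping the $s=1$ (``diagonal'') term as a nonnegative square $\sum\bigl\|\sum_A\delta^A_{A_1'}f_{A_2'\ldots A_k'A}\bigr\|_\varphi^2$ and \emph{not} commuting it. If you commute every term, as a generic Bochner--Kodaira computation (and your ``integration by parts in the cross term'' plan) would, you generate the first-order cross term $\sum\bigl(Z^{A'}_Bf_{A B'\ldots},Z^{A'}_Af_{BB'\ldots}\bigr)_\varphi$, which on $\mathbb R^{4n}$, $n>1$, is not controlled by $\|\mathscr D_0^*f\|_\varphi$ and $\|\mathscr D_1f\|_\varphi$; this is precisely the obstruction pointed out in Remark \ref{rem:bochner}(1). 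Only the $s\geq 2$ terms are handled by commutators, and they produce the curvature contribution $8(k-1)\|f\|_\varphi^2$.

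Two further points in your plan do not match how the argument actually closes. First, the $\|\mathscr D_1f\|_\varphi^2$ term does not arise from ``invoking $\mathscr D_1\mathscr D_0=0$ to cancel indefinite cross terms''; it comes from the elementary antisymmetrisation identity of Lemma \ref{lem:sym}(3), which rewrites the surviving first-order cross term as a full square minus $2\|{\rm antisymmetric\ part}\|_\varphi^2$, the antisymmetric part being exactly $\tfrac12\mathscr D_1f$ (for each fixed $B_3'\ldots B_k'$). Second, the resulting inequality is not of your form $\|\mathscr D_0^*f\|^2_\varphi+\|\mathscr D_1f\|^2_\varphi\geq\langle Qf,f\rangle_\varphi$ with a fiberwise $Q\geq 4\,{\rm Id}$ to be checked by ${\rm SU}(2)$ representation theory; it is the $k$-weighted estimate $k\|\mathscr D_0^*f\|^2_\varphi+\tfrac{k-1}2\|\mathscr D_1f\|^2_\varphi\geq 8(k-1)\|f\|^2_\varphi$, and the constant $4$ together with the hypothesis $k\geq 2$ enters through $\tfrac{k}{2(k-1)}\leq 1$, the curvature term being simply $8(k-1)$ times the identity by the one-line computation $Z^{A'}_B\overline{Z^{B'}_A}|x|^2=4\delta_{AB}\delta_{A'B'}$ -- no representation-theoretic eigenvalue analysis is needed or would substitute for the split-symmetrisation device. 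As written, then, the proposal has a genuine gap: the identity it relies on is never derived, and the natural way of deriving it (commuting all terms) fails for this complex.
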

The reason we only consider the weight $\varphi(x)=|x|^2$ is that the weighted $L^2$ estimate in this case is relatively easier.
   On $\mathbb{R}^{4n}$ for $n>1$, the operators ${\mathscr D}_0^{(0)}$ and ${\mathscr D}_1^{(1)}$ are differential operators of the second order,  and the weighted $L^2$ estimate
   is more difficult in these cases. While on $\mathbb{R}^{4 }$, the $k$-Cauchy-Fueter complexes for $k=0,1$ are trivial. So we   restrict to the case $k\geq 2$.

The   weighted $k$-Bergman space $A^2_{(k)}(\mathbb{R}^{4n}, \varphi)$ is a closed Hilbert  subspace.
  We call the orthogonal projection $P: L_\varphi^2( \mathbb{R}^{4n},  \odot^{k
}\mathbb{C}^{2 } ) \longrightarrow A^2_{(k)}(\mathbb{R}^{4n}, \varphi)$ the {\it  weighted $k$-Bergman projection}. It can be  nicely expressed  in terms of the the canonical solution operator as
 \begin{equation}\label{eq:Bergman-proj}
          Pf =f- {\mathscr D}_0^*N_\varphi{\mathscr D}_0f
       \end{equation}
       for $f\in{\rm Dom} ({\mathscr D}_0 ) $, as in the theory of several complex variables (cf. theorem 4.4.5 in \cite{CS}).

       If we use the first isomorphism in (\ref{eq:iso}),  a function in $L_\varphi^2( \mathbb{R}^{4n},  \odot^{k
}\mathbb{C}^{2 } )$ is $\mathbb{C}^{k+1 }$-valued.
       The  weighted $k$-Bergman projection $P$ has a kernel $K(x,y)$
 such that  the following integral formula   holds
 \begin{equation}\label{eq:Bergman-kernel}
    f(x)=\int_{\mathbb{R}^{4n}}K (x,y)f(y) e^{-2\varphi}dV
 \end{equation} for any $f\in A^2_{(k)}(\mathbb{R}^{4n}, \varphi)$.
 The kernel $K(x,y)$ is a $(k+1)\times(k+1)$-matrix valued function, which is $k$-regular in variables $x$ and anti-$k$-regular in variables $y$.

The main difference between the $k$-Cauchy-Fueter complexes and  Dolbeault complex   in the theory of several complex variables  is that there exist symmetric forms except for the exterior forms. The analysis of  exterior forms is classical, while the analysis of symmetric forms is relatively new. We can  handle components of a $\odot^{k }\mathbb{C}^{2 } $-  or $\odot^{k-1
}\mathbb{C}^{2 } \otimes  \mathbb{C}^{2n}  $-valued function. Such notations are used by physicists as two-spinor  notations for the   massless field  operators (cf. e.g. \cite{PR1} \cite{PR2} and references therein). They also appear in studying of quaternionic manifolds (cf. e.g. \cite{Wa-mfd} and references therein).

The weighted  $L^2$ estimate for the model case: $n=1$ and  $k=2$,   is obtain in section 2. The general case is proved in section 3.
Based on the weighted $L^2$ estimate,  Theorem \ref{thm:canonical} is proved in section 3.
In section 4, we establish a localized  a priori estimate for $\Box_\varphi$ and the    Caccioppoli-type estimate,  which hold  for many systems of PDEs of the divergence form. From these estimates and  the weighted $L^2$ estimate, we derive the asymptotic decay of the canonical solution $ {\mathscr D}_0^*N_\varphi  f$ to the nonhomogeneous      $k$-Cauchy-Fueter equation (\ref{eq:w-kCF}) when $f$   is compactly supported. Then by choosing suitable $f$ in (\ref{eq:Bergman-proj}), we find the asymptotic estimate for the weighted $k$-Bergman kernel from  the asymptotic behavior of the canonical solution.
\begin{thm}\label{thm:decay}Suppose that $\varphi(x)=|x|^2$ and $k=2,3,\ldots$. Then
   we have the following pointwise estimate for the weighted $k$-Bergman kernel: there exists $\varepsilon>0$
   only depending on $k,n $ such that
   \begin{equation}\label{eq:decay}
     |K(x,y)|\leq C e^{|x|^2+ |y|^2+\frac \varepsilon2(|x|+|y|) -\varepsilon|x-y|}
   \end{equation}
   for any $x,y\in \mathbb{R}^{4n}$ with $|x-y|>3$, and some constant $C>0$ only depending on $k,n ,\varepsilon$.
\end{thm}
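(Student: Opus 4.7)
The plan is to reduce (\ref{eq:decay}) to an exponential $L^2$-decay statement for the canonical solution $\mathscr{D}_0^{*}N_\varphi g$ when the source $g$ is compactly supported, as the introduction already advertises. Fix $y_0\in\mathbb{R}^{4n}$ and a unit vector $v\in\mathbb{C}^{k+1}$, pick a smooth bump $\chi$ supported in the unit ball with $\int\chi\,dV=1$, and set $f(y):=\chi(y-y_0)v$. Combining (\ref{eq:Bergman-proj}) with the reproducing formula (\ref{eq:Bergman-kernel}) gives, for every $x_0$ with $|x_0-y_0|\geq 2$,
\begin{equation*}
   -\,\mathscr{D}_0^{*}N_\varphi\mathscr{D}_0 f(x_0)\;=\;Pf(x_0)\;=\;\int K(x_0,y)\chi(y-y_0)v\,e^{-2|y|^{2}}dV(y).
\end{equation*}
Thus any pointwise bound on the left-hand side, with $g:=\mathscr{D}_0 f$ supported in $B(y_0,1)$ and satisfying $\|g\|_\varphi\lesssim e^{-|y_0|^{2}}$, controls a $\chi$-smoothing of $K(x_0,\cdot)v$ at $y_0$, and anti-$k$-regularity of $K$ in the second variable lets one pass from this smoothing to the pointwise value of the kernel.

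The heart of the argument is an exponential decay estimate for $u:=\mathscr{D}_0^{*}N_\varphi g$ outside $\mathrm{supp}(g)$. Writing $w:=N_\varphi g$, one has $\Box_\varphi w=0$ on $\mathbb{R}^{4n}\setminus\mathrm{supp}(g)$. Combining the Caccioppoli-type inequality and the localized a priori estimate for $\Box_\varphi$ promised in Section 4 with the weighted $L^2$ estimate of Theorem \ref{thm:L2} yields, on each annular shell $S_R:=\{R<|z-y_0|<R+1\}$ with $R$ large, a three-ball-type inequality
\begin{equation*}
   \|w\|_{L^2_\varphi(S_R)}\;\leq\;\theta\,\bigl(\|w\|_{L^2_\varphi(S_{R-1})}+\|w\|_{L^2_\varphi(S_{R+1})}\bigr)
\end{equation*}
with a fixed $\theta<\tfrac12$. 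Iterating outward from the global bound $\|w\|_\varphi\leq\tfrac14\|g\|_\varphi$ of Theorem \ref{thm:canonical}(1) produces $\|w\|_{L^2_\varphi(B(x_0,1))}\leq C e^{-\delta|x_0-y_0|}\|g\|_\varphi$ for some $\delta>0$, and a further application of Caccioppoli transfers the same exponential decay to $u=\mathscr{D}_0^{*}w$.

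To convert this $L^2$-decay into a pointwise bound at $x_0$ one applies a local mean-value inequality to the components of $u$ on $B(x_0,1)$, where they satisfy a constant-coefficient elliptic system; passing from weighted to unweighted $L^2$ on $B(x_0,1)$ costs a factor $\sup_{z\in B(x_0,1)}e^{|z|^{2}}\leq e^{|x_0|^{2}+2|x_0|+1}$. Combined with $\|g\|_\varphi\lesssim e^{-|y_0|^{2}}$ and with the smoothing step of the first paragraph, which contributes a symmetric $e^{2|y_0|^{2}+C|y_0|}$ factor when $K(x_0,y_0)$ is recovered from its $\chi$-smoothing, this produces (\ref{eq:decay}) with $\varepsilon=\delta/2$, the correction term $\tfrac{\varepsilon}{2}(|x|+|y|)$ being exactly what is needed to absorb the linear-in-$|x_0|,|y_0|$ losses accumulated in the two conversions. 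The main obstacle is the shell iteration itself: because $e^{-2|z|^{2}}$ is Gaussian rather than bounded, the Caccioppoli constants on $S_R$ pick up contributions from $|\nabla\varphi(z)|\sim|z|\sim R+|y_0|$, and one must organise the chain of annuli so that these losses accumulate only linearly in $|x_0-y_0|$ and $|y_0|$ and are swallowed by the geometric gain $\theta^{R}$ of the iteration, leaving a strictly positive margin $\varepsilon$ for the exponential in (\ref{eq:decay}).
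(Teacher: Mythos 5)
Your overall architecture is the same as the paper's: first prove exponential decay of the canonical solution $\mathscr{D}_0^*N_\varphi g$ away from the support of $g$ (the paper's Theorem \ref{thm:decay0}), then extract a column of $K(x,\cdot)$ by feeding a bump localized at $y$ into the projection formula (\ref{eq:Bergman-proj}). But your kernel-extraction step has a genuine gap. With $f(y)=\chi(y-y_0)v$ you only control $Pf(x_0)=\int K(x_0,y)\chi(y-y_0)v\,e^{-2|y|^2}dV(y)$, i.e.\ a smoothing of $K(x_0,\cdot)e^{-2|\cdot|^2}$, and a bound on a smoothing does not bound the pointwise value: the components of $K(x_0,\cdot)$ are harmonic, but the factor $e^{-2|y|^2}$ is not radial about $y_0$, so no exact mean-value identity applies (already for an affine harmonic $h$ one can make $\int h\,\chi(\cdot-y_0)e^{-2|y|^2}dV$ vanish while $h(y_0)\neq0$). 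The paper avoids this by building the compensating weight into the test function: it takes $f_y(y')=\eta_y(y')e^{2|y'|^2}$ in the $j$-th slot with $\eta_y$ \emph{radial} about $y$ and of integral one, so the weights cancel, the mean-value property gives exactly $K(x,y)_{\cdot j}=Pf_y(x)=-\mathscr{D}_0^*N_\varphi\mathscr{D}_0f_y(x)$ for $|x-y|>3$, and $\|\mathscr{D}_0f_y\|_\varphi\le C e^{|y|^2+5\delta|y|}$ produces (\ref{eq:decay}). Your reduction is repairable by exactly this device, but as written the passage ``from the smoothing to the pointwise value'' is unjustified.

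For the decay itself, the paper does not iterate over shells: it inserts an Agmon-type weight, applying the localized a priori estimate (\ref{eq:localized-L^2}) to $N_\varphi f$ with cutoff $\eta e^{\varepsilon b}$, $b(x')=\min\{|x'-y|,|x-y|\}$, and combines it with the coercivity $4\|\cdot\|_\varphi^2\le\mathcal{E}_\varphi(\cdot,\cdot)$ from Theorem \ref{thm:L2}; choosing $\varepsilon$ with $C_0(4n\varepsilon)^2\le\frac12$ absorbs the extra gradient term in one shot. Your three-shell inequality with unit-width annuli and ``a fixed $\theta<\frac12$'' is not justified: the same two ingredients only give $\|w\|^2_{L^2_\varphi(S_R)}\le\frac{C_0}{4}\bigl(\|w\|^2_{L^2_\varphi(S_{R-1})}+\|w\|^2_{L^2_\varphi(S_{R+1})}\bigr)$ with the absolute constant $C_0$ of Proposition \ref{prop:localized-L^2}, and nothing forces $C_0/4<\frac12$; to make $\theta$ small you must widen the shells (gaining $1/L^2$ from $|d\eta|\le 1/L$), which is precisely the small-$\varepsilon$ Agmon argument in disguise. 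Moreover, the obstacle you single out --- that the Caccioppoli constants should pick up $|\nabla\varphi(z)|\sim R+|y_0|$ --- does not occur: because everything is phrased in the weighted norms and the twisted adjoints $\delta^A_{A'}=Z^A_{A'}-2Z^A_{A'}\varphi$, the constant in (\ref{eq:localized-L^2}) is absolute, so the worry is misplaced while the actual issue (the size of $\theta$ versus the coercivity constant) is asserted rather than established. The remaining steps --- Caccioppoli to pass from $N_\varphi f$ to $\mathscr{D}_0^*N_\varphi f$, and the mean-value/elliptic sup bound on $B(x_0,1)$ with the $e^{|x_0|^2+O(|x_0|)}$ weight-conversion factor --- agree with the paper's proof.
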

 The first estimate for the  Bergman kernel of the weighted $L^2$-holomorphic functions over the complex plane $\mathbb{ C }$ is due to  Christ \cite{Ch}.
 The result of Christ was extended by Delin \cite{De} to several complex variables for  strict plurisubharmonic  weights. See also \cite{DA} \cite{MO} and references therein for  recent results. Our estimate is a little bit weaker than the complex case because we have  an extra factor $e^{\frac \varepsilon2(|x|+|y|)}$. But the estimate is the same when $|y|$ is larger compared to $|x|$ (cf. Remark \ref{rem:decay}).

I would like to thank the referee for valuable suggestions.
\section{The weighted $L^2$  estimate in the model case: $n=1$ and $k=2$}
\subsection{The complex vector fields  $Z_{AA'}$'s on $\mathbb{R}^{4n}$ and their formal adjoints}
To give the definition of the  $k$-Cauchy-Fueter operator, we need the following complex vector fields
  \begin{equation}\label{eq:k-CF} (Z_{AA'}):=\left(
                             \begin{array}{ll}
                               Z_{00'  } &  {Z}_{ 01' } \\
                               Z_{1 0' } &  {Z}_{11' } \\
                              \quad \vdots& \quad\vdots\\
                               Z_{ (2l )0' } & {Z}_{ (2l )1' } \\
                               Z_{ (2l+1)0'  } &  {Z}_{ (2l+1)1' } \\  \quad\vdots& \quad\vdots\\
                             \end{array}
                           \right):=\left(
                                      \begin{array}{ll}
                                     \partial_{x_1} +\textbf{i}\partial_{x_2}  & - \partial_{x_3} -\textbf{i}\partial_{x_4}  \\
                                       \partial_{x_3}-\textbf{i}\partial_{x_4}  &\hskip 3mm \partial_{x_1}-\textbf{i}\partial_{x_2}  \\
                                        \qquad \vdots&\qquad\vdots\\
                                       \partial_{x_{4l+1}} +\textbf{i}\partial_{x_{4l+2}}  & -\partial_{x_{4l+3}}-\textbf{i}\partial_{x_{4l+4} }  \\
                                       \partial_{x_{4l+3}} -\textbf{i}\partial_{x_{4l+4} }  &\hskip 3mm\partial_{x_{4l+1}} -\textbf{i}\partial_{x_{4l+2}}  \\ \qquad \vdots&\qquad\vdots\\
                                      \end{array}
                                    \right),
\end{equation} where $A=0,\ldots, 2n-1$, $A'=0',1'$. This is motivated by the embedding of  the quaternion  algebra   into the space of complex $2\times 2$-matrices:
\begin{equation*}
 x_{1}+\mathbf{i}x_{2}+\mathbf{j} x_{3}+\mathbf{k}x_{4}   \longmapsto
 \left(\begin{array}{rr} x_{1} +\textbf{i}x_{2}  & - x_{3} -\textbf{i}x_{4}  \\
                                       x_{3}-\textbf{i}x_{4}  &\hskip 3mm x_{1}-\textbf{i}x_{2}
 \end{array}\right).
 \end{equation*}
We will use
 \begin{equation}\label{eq:epsilon}  (\varepsilon_{A'B'})=\left( \begin{array}{cc} 0&
 1\\-1& 0\end{array}\right),\qquad  (\varepsilon^{A'B'}) =\left( \begin{array}{cc} 0&
- 1\\1& 0\end{array}\right)
 \end{equation}to raise or lower primed indices,
where $(\varepsilon^{A'B'})$ is the inverse of $(\varepsilon_{A'B'})$, i.e.,
$
  \sum_{B'=0' ,1'} \varepsilon_{A'B'}\varepsilon^{B'C'}=\delta_{A'}^{ C'}=\sum_{B'=0' ,1'}\varepsilon^{C'B'}\varepsilon_{B'A'}.
$
For example,
\begin{equation*}
   Z_{A}^{A'}=\sum_{B'=0' ,1'} Z_{AB'}\varepsilon^{B'A'}=Z_{A0'}\varepsilon^{0'A'}+Z_{A1'}\varepsilon^{1'A'}.
\end{equation*}
 In particular, we have
$
    Z_{A}^{0'}=Z_{A1'} ,   Z_{A}^{ 1'}=-Z_{A0'}
$ by
\begin{equation}\label{eq:varepsilon-anti}
   \varepsilon^{1'0'}=-\varepsilon^{0'1'}=1 ,\qquad  \varepsilon^{0'0'}= \varepsilon^{1'1'}=0
\end{equation}
 in (\ref{eq:epsilon}).
Then
\begin{equation}\label{eq:k-CF-raised} \left(Z_{A}^{A'}\right):=\left(
                             \begin{array}{ll}
                               Z_{0}^{0'  } &  {Z}_{ 0}^{1' } \\
                               Z_{1}^{ 0' } &  {Z}_{1}^{1' } \\
                               \quad \vdots& \quad\vdots\\
                               Z_{ (2n-2)}^{0' } & {Z}_{ (2n-2)}^{1' } \\
                               Z_{ (2n-1)}^{0'  } &  {Z}_{ (2n-1)}^{1' } \\
                             \end{array}
                           \right):= \left(
                             \begin{array}{ll}
                             {Z}_{ 01' } & - Z_{00'  } \\
                            {Z}_{11' } &  -Z_{1 0' } \\
                              \quad  \vdots& \quad\vdots\\
                             {Z}_{ (2n-2)1' } & -Z_{ (2n-2)0' } \\
                              {Z}_{ (2n-1)1' } & -Z_{ (2n-1)0'  } \\
                             \end{array}
                           \right).
\end{equation}

We also use
 \begin{equation} \label{eq:epsilon2} (\epsilon_{AB})= \left ( \begin{array}{rrrrr} 0&
 1&&&\\-1& 0&&&\\
 &&\ddots&&\\&&&0&
 1\\&&& -1& 0\end{array}\right)
 \end{equation} and $ (\epsilon^{A B }) $, the inverse of $(\epsilon_{AB})$,   to raise or lower unprimed indices, e.g.
$
    Z_{A'}^{A }=\sum_{B =0}^{2n-1}Z_{A'B }\epsilon^{B A } .
$
The advantage of using
  raising   indices is that the  adjoint  of $ Z_{A}^{A'}$  can  be written in a very simple form.
\begin{prop}\label{prop:unitary-gamma} (1) The formal adjoint operator $Z_\varphi^*$ of a complex vector field $Z$ is
\begin{equation*}
      Z_\varphi^*=-\overline{Z }+2\overline{Z  }\varphi.
\end{equation*}

 (2) We have    \begin{equation}\label{eq:Z-raise}
      Z^{AA'}=\overline{Z_{AA'}},
    \end{equation}
and   the formal adjoint operator of
 of $ Z_{A}^{A'}$ is
   \begin{equation}\label{eq:Z_AA'}
 \left ( Z_{A}^{A'}\right)_\varphi^*=  Z^{ A}_{A'} -2Z^{ A}_{A'}\varphi.
\end{equation}
\end{prop}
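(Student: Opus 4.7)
The plan is to deduce part (1) from integration by parts in $L^2_\varphi$, and then to reduce part (2) to part (1) by a short index computation.

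For part (1), I would take $u,v\in C_c^\infty(\mathbb{R}^{4n})$ and a general constant-coefficient complex vector field $Z = \sum_j a_j \partial_{x_j}$, and expand
\begin{equation*}
 (Zu,v)_\varphi = \sum_j a_j\int_{\mathbb{R}^{4n}} (\partial_{x_j}u)\,\bar v\,e^{-2\varphi}\,dV.
\end{equation*}
Integrating each $\partial_{x_j}$ by parts onto $\bar v\,e^{-2\varphi}$ and using $\partial_{x_j}e^{-2\varphi} = -2(\partial_{x_j}\varphi)e^{-2\varphi}$, then taking complex conjugates to rewrite the result in the form $\int u\,\overline{Z_\varphi^* v}\,e^{-2\varphi}\,dV$, one reads off $Z_\varphi^* v = -\overline{Z}v + 2(\overline{Z}\varphi)v$, where $\overline{Z}:=\sum_j \bar a_j \partial_{x_j}$. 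Density of $C_c^\infty$ in the appropriate graph domain then gives the statement.

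For the identity $Z^{AA'} = \overline{Z_{AA'}}$ in part (2), I would verify the equality block by block using the explicit formulas in (\ref{eq:k-CF}). Since $Z^{AA'} = \sum_{B,B'}\epsilon^{AB}\varepsilon^{A'B'}Z_{BB'}$ and both $(\epsilon^{AB})$ and $(\varepsilon^{A'B'})$ are antisymmetric with entries $\pm 1$, only one term survives in each sum; matching the four resulting entries in each $2\times 2$ block against the explicit expressions for $Z_{AA'}$ shows that each is the complex conjugate of the corresponding $Z_{AA'}$. For the adjoint formula, I would apply part (1) to $Z=Z_A^{A'}$ to get
\begin{equation*}
 (Z_A^{A'})_\varphi^* = -\overline{Z_A^{A'}} + 2\overline{Z_A^{A'}}\,\varphi,
\end{equation*}
then rewrite $\overline{Z_A^{A'}} = \overline{Z_{AB'}}\,\varepsilon^{B'A'} = Z^{AB'}\varepsilon^{B'A'}$ using the first identity of part (2), expand $Z^{AB'} = \epsilon^{AC}\varepsilon^{B'D'}Z_{CD'}$, and contract the two copies of $\varepsilon$ via the elementary identity $\sum_{B'}\varepsilon^{B'D'}\varepsilon^{B'A'} = \delta^{A'}_{D'}$. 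This collapses the expression to $\sum_C \epsilon^{AC} Z_{CA'}$, which under the author's raising convention (contracting on the right, consistent with the primed case) equals $-Z^A_{A'}$. Substituting produces $(Z_A^{A'})_\varphi^* = Z^A_{A'} - 2Z^A_{A'}\,\varphi$.

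No step is deep; the main difficulty is sign bookkeeping for the antisymmetric raising tensors $\epsilon^{AB}$ and $\varepsilon^{A'B'}$. In particular, the apparent sign flip $\overline{Z_A^{A'}} = -Z^A_{A'}$ is exactly what converts the minus sign delivered by part (1) into the plus sign of $Z^A_{A'}$ in the stated adjoint formula, so one must be careful to fix the primed and unprimed raising conventions consistently throughout.
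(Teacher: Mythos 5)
Your proposal is correct and takes essentially the same route as the paper: part (1) by integration by parts against $e^{-2\varphi}$ for a constant-coefficient field (the paper phrases this as the vanishing of $\int Z(u\bar v\,e^{-2\varphi})\,dV$), and part (2) by directly checking $Z^{AA'}=\overline{Z_{AA'}}$ from (\ref{eq:k-CF}) and then applying part (1) together with the sign identity $\overline{Z_A^{A'}}=-Z^{A}_{A'}$. The only cosmetic difference is that the paper gets this last identity in one line from $\varepsilon^{B'A'}=-\varepsilon_{B'A'}$, whereas you expand $Z^{AB'}$ fully and contract via $\sum_{B'}\varepsilon^{B'D'}\varepsilon^{B'A'}=\delta^{A'}_{D'}$ and the antisymmetry of $\epsilon^{AC}$ — an equivalent computation.
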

\begin{proof} (1) For a complex vector field $Z$, we have
\begin{equation*}
   (Z u,v)_\varphi=(u, Z_\varphi^*v)_\varphi.
\end{equation*}for $u,v\in C_0^\infty(\Omega,\mathbb{C})$.
This is because
\begin{equation*}
0= \int_{\Omega} Z(u \overline{v}  e^{-2\varphi})dV=\int_{\Omega} Z u\cdot \overline{v}  e^{-2\varphi} dV+\int_{\Omega} u\cdot Z \overline{v} \cdot e^{-2\varphi} dV-2\int_{\Omega} u \overline{v}\cdot Z\varphi \cdot e^{-2\varphi} dV.
\end{equation*}

(2)
 By raising indices,  $Z^{AA'}=\sum_{B =0}^{2n-1} \sum_{B'=0',1'} Z_{BB'}\epsilon^{BA}\varepsilon^{B'A'}$. It is direct from definition of $ Z_{AA'}$'s in (\ref{eq:k-CF}) to see that
 \begin{equation*}\begin{split}
   & \overline{Z_{00'}}=Z_{11'}=Z^{00'},\qquad  \overline{Z_{10'}}=-Z_{01'}=Z^{10'},\\&
   \overline{Z_{01'}}=-Z_{10'}=Z^{01'},\qquad  \overline{Z_{11'}}= Z_{00'}=Z^{11'}, \cdots,
    \end{split}
 \end{equation*}by  (\ref{eq:varepsilon-anti}) and similar relations for $\epsilon^{AB}$. Then $\overline{Z_{AA'}} =Z^{AA'}$.
Since  $( Z_{ A}^{A'})_\varphi^*=- \overline{Z_{ A}^{A'}} +2\overline{Z_{ A}^{A'}\varphi} $ by (1), and
\begin{equation}\label{eq:overline}
   \overline{Z_{ A}^{A'}}=\sum_{B'=0',1'}\overline{Z_{A B'}}\varepsilon^{B'A'}=-\sum_{B'=0',1'}Z^{ A B'} \varepsilon_{B'A'}=-Z^{ A}_{A'}
\end{equation}we get (\ref{eq:Z_AA'}). Here $\varepsilon^{B'A'}=- \varepsilon_{B'A'}$ by  (\ref{eq:epsilon}). \end{proof}

We will use the notations of the following complex differential operators:
\begin{equation}\label{eq:adjoint-delta00}
  \delta_{A'}^A :=  Z^{ A}_{A'} -2Z^{ A}_{A'}\varphi,
  \end{equation} for $A=0,\ldots, 2n-1$, $A'=0',1'$. Then we have $  ( Z_{ A}^{A'} )_\varphi^*=\delta_{A'}^A $ and
  \begin{equation}\label{eq:adjoint-delta}
    \left(Z_{ A}^{A'}u,v\right)_\varphi= \left( u,\delta_{A'}^A  v\right)_\varphi
  \end{equation}
for $u,v \in C_0^1(\Omega,\mathbb{C})$. By taking conjugate, we also have
  \begin{equation}\label{eq:delta-adjoint}
    \left(\delta_{A'}^A  u,v\right)_\varphi= \left( u, Z_{ A}^{A'} v\right)_\varphi.
  \end{equation}

\subsection{The weighted $L^2$  estimate in the model case  $n=1$ and  $k=2$ }

In this case,
\begin{equation}\label{eq:quaternionic-complex-diff-V-2}\begin{split}
\mathscr V_0:=  \odot^{2 }\mathbb{C}^{2 }\cong \mathbb{C}^3,  \qquad\mathscr V_1:= \mathbb{C}^{2 } \otimes  \mathbb{C}^{2 }\cong \mathbb{C}^4, \qquad\mathscr V_2:=  \wedge^2 \mathbb{C}^{2 } \cong \mathbb{C}^1 .\end{split}
\end{equation}
By definition,  $ \odot^{2
}\mathbb{C}^{2 } $ is a subspace of $ \otimes^{2
}\mathbb{C}^{2 } $, and
an element  $f$ of $  L_\varphi^2( \mathbb{R}^{4 },  \odot^{2
}\mathbb{C}^{2 } )$ has $4$ components
$f_{0'0'},f_{1'0'}, f_{0'1'}$ and $f_{1'1'}$ such that $ f_{1'0'}=f_{0'1'}$ . Its $L^2$ inner product is induced from that of $  L_\varphi^2( \mathbb{R}^{4 },  \otimes^{2
}\mathbb{C}^{2 } )$ by
\begin{equation*}
  \langle f,g\rangle_\varphi=\sum_{A',B'=0',1'} ( f_{A'B'}, {g_{A'B'}})_{ \varphi} = (f_{0'0'}, {g_{0'0'}})_{ \varphi}+2(f_{0'1'}, {g_{0'1'}})_{ \varphi}+(f_{1'1'},{g_{1'1'}})_{ \varphi};
\end{equation*}
 $f\in L_\varphi^2( \mathbb{R}^{4 },   \mathbb{C}^{2 } \otimes
 \mathbb{C}^{2 } )$
has $4 $ components $f_{A'A}$, $A=0,1, A'=0',1'$, and
\begin{equation*}
  \langle f,g\rangle_\varphi=\sum_{A=0,1} \sum_{A' =0',1'}  ( f_{A'A}, {g_{A'A}})_{ \varphi};
\end{equation*}
while
 $f\in L_\varphi^2( \mathbb{R}^{4 }, \wedge^2 \mathbb{C}^{2 }   )$
has  components  $f_{AB}$ with $f_{AB}=-f_{B A}$, among which there is only one nontrivial   (i.e. $ f_{00}=f_{11}=0$,  $ f_{01}=-f_{10} $),  and
 \begin{equation*}
  \langle f,g\rangle_\varphi=\sum_{A,B=0,1} ( f_{AB}, {g_{AB}})_{ \varphi}=2    ( f_{01}, {g_{01}})_{ \varphi}.
\end{equation*}

The operators in  the $2$-Cauchy-Fueter complex  over $\mathbb{R}^4$ are given by
\begin{equation}\label{eq:k-CF-0}\begin{split}
    ({\mathscr D}_0 \phi)_{A  'A }:& =\sum_{B'=0',1'}Z^{B'}_A\phi_{B' A'}=Z^{0'}_A\phi_{0'A' }+Z^{1'}_A\phi_{1'A' },\end{split}\end{equation}for
    $ \phi\in C^1( \mathbb{R}^{4 },  \odot^{2
}\mathbb{C}^{2 } )$
where  $ A =0,1, A'=0',1'$,    and \begin{equation}\label{eq:k-CF-1}\begin{split}
    ({\mathscr D}_1 \psi)_{AB }:&=2\sum_{A'=0',1'} Z^{A'}_{ [A}\psi_{B] A' } =\sum_{A'=0',1'}(Z^{A'}_{A}\psi_{B A' }-Z^{A'}_{B}\psi_{AA' })
\end{split}\end{equation}for
    $ \psi\in C^1( \mathbb{R}^{4 }, \mathbb{C}^{2 } \otimes
 \mathbb{C}^{2 } )$, where
 \begin{equation*} { {h}_{[A  B ] }}:=\frac 12 ({ {h}_{ A  B   }}-{ {h}_{B  A  }})
 \end{equation*}
is the antisymmetrisation. Here and in the sequel, we write $\psi_{ A  A ' }:=\psi_{A '  A}$   for convenience.   It is direct to see that
\begin{equation}\label{eq:exact}\begin{split}
    (\mathscr  D_1\mathscr D_0\phi)_{AB  } &=\sum_{A'=0',1'}\left(Z^{A'}_{ A}(\mathscr D_0\phi)_{  B A' }-Z^{A'}_{ B}(\mathscr D_0\phi)_{A   A' }\right)\\&=\sum_{A',C'=0',1'}\left( Z^{A'}_{
    A}Z^{C'}_B\phi_{C'A' }-Z^{A'}_{B}Z^{C'}_A\phi_{C'A'  }\right)=0
\end{split}\end{equation}
by relabeling  indices, $ \phi_{C'A' }=\phi_{A'C' }$ and   the commutativity $\nabla^{A'}_{ B}\nabla^{C'}_A=\nabla^{C'}_A\nabla^{A'}_{B}$,  as scalar differential operators of
constant complex coefficients  (cf. (2.11) in \cite{CMW}).

\begin{lem}\label{lem:sym} (1)  For any $h\in L_\varphi^2( \mathbb{R}^{4 },   \odot^2 \mathbb{C}^2)$ and $ H \in L_\varphi^2( \mathbb{R}^{4 },\otimes^2 \mathbb{C}^2)$, we have
   \begin{equation}\label{eq:sym}\sum_{A',B'} \left(  h_{A'B'} ,{H_{ A' B'  }}\right)_{ \varphi } =
    \sum_{A',B'} \left( h_{A'B'} , {H_{(A' B') }}\right)_{ \varphi } ,
   \end{equation}
where
 \begin{equation*}
    H_{(A' B')  }:=\frac 12 ({H_{ A' B'  }}+{H_{B' A' }})
 \end{equation*}
is the symmetrisation, i.e. $({H_{(A' B') }})\in L_\varphi^2( \mathbb{R}^{4 },\odot^2 \mathbb{C}^2)$.

\item[(2)]{For any $ h \in L_\varphi^2( \mathbb{R}^{4 },\wedge^2 \mathbb{C}^2)$ and $ H \in L_\varphi^2( \mathbb{R}^{4 },\otimes^2 \mathbb{C}^2)$, we have
    \begin{equation}\label{eq:antisym1}\sum_{A ,B} \left(  h_{ A B},{H_{ A B}}\right)_{ \varphi } =
    \sum_{A ,B   } \left(  h_{ A B  } ,{H_{ [A B]}}\right)_{ \varphi }.
   \end{equation}}

\item[(3)]{  For any $h,  H \in L_\varphi^2( \mathbb{R}^{4 },\otimes^2 \mathbb{C}^{2 })$, we have
   \begin{equation}\label{eq:antisym}
     \sum_{A ,B   }  \left(  h_{B A } ,{H_{ A  B  }}\right)_{ \varphi } =  \sum_{A ,B   }   \left(  h_{A B } ,{H_{ A  B   }}\right)_{ \varphi } -2  \sum_{A ,B   } \left(  h_{[A B] } ,{H_{ [ A  B]    }}\right)_{ \varphi } .
   \end{equation}}
\end{lem}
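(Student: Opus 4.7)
The three identities are bookkeeping statements about how a tensor of definite symmetry pairs with an arbitrary tensor in the componentwise weighted inner product, so the strategy in every case is the same: split the finite sum into two equal halves, swap the pair of dummy indices in one half, and invoke the symmetry of the factor that has one. The sesquilinearity of $(\cdot,\cdot)_\varphi$ is used only termwise, so no analytic input beyond $h,H\in L^2_\varphi$ is needed.

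For part (1), I would write
\begin{equation*}
\sum_{A',B'}\bigl(h_{A'B'},H_{A'B'}\bigr)_\varphi
=\tfrac12\sum_{A',B'}\bigl(h_{A'B'},H_{A'B'}\bigr)_\varphi
+\tfrac12\sum_{A',B'}\bigl(h_{A'B'},H_{A'B'}\bigr)_\varphi,
\end{equation*}
relabel $A'\leftrightarrow B'$ in the second sum to obtain $\sum(h_{B'A'},H_{B'A'})_\varphi$, and then use $h_{B'A'}=h_{A'B'}$ to factor out $h_{A'B'}$ against $\tfrac12(H_{A'B'}+H_{B'A'})=H_{(A'B')}$. Part (2) is completely parallel: the antisymmetry $h_{AB}=-h_{BA}$ turns the index swap into a sign, so $h_{AB}$ ends up paired with $\tfrac12(H_{AB}-H_{BA})=H_{[AB]}$.

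For part (3), the plan is to reduce to part (2). Using the identity $h_{BA}=h_{AB}-2h_{[AB]}$ (which is just the decomposition $h=h_{(AB)}+h_{[AB]}$ rearranged), linearity in the first slot gives
\begin{equation*}
\sum_{A,B}\bigl(h_{BA},H_{AB}\bigr)_\varphi
=\sum_{A,B}\bigl(h_{AB},H_{AB}\bigr)_\varphi
-2\sum_{A,B}\bigl(h_{[AB]},H_{AB}\bigr)_\varphi.
\end{equation*}
Since the first factor in the last sum is antisymmetric in $A,B$, part (2) applied to $h':=(h_{[AB]})\in L^2_\varphi(\mathbb R^4,\wedge^2\mathbb C^2)$ replaces $H_{AB}$ by $H_{[AB]}$, yielding the claimed formula.

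I do not expect any real obstacle: the sole risk is clerical, namely keeping the primed and unprimed indices straight and being careful that the swap $A'\leftrightarrow B'$ (or $A\leftrightarrow B$) is applied to both entries of the inner product before the symmetry of $h$ is used. Once this is done, each identity falls out in two or three lines.
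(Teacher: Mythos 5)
Your proposal is correct and follows essentially the same route as the paper: parts (1) and (2) are the same dummy-index swap combined with the (anti)symmetry of $h$, and part (3) uses precisely the paper's decomposition $h_{BA}=h_{AB}-2h_{[AB]}$ followed by the identity of part (2) to replace $H_{AB}$ by $H_{[AB]}$. No gaps.
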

\begin{proof}
  (1) This is because
 \begin{equation*}
    \sum_{A',B'} h_{A'B'} \overline{H_{(A' B') }}=\frac 12 \sum_{A',B'} h_{A'B'} \left(\overline{H_{ A' B'  }}+\overline{H_{ B' A' }}\right)=  \sum_{A',B'} h_{A'B'} \overline{H_{ A' B'  }}
   \end{equation*} by changing indices and $h_{A'B'}=h_{B'A'}$.

   (2)  This is because\begin{equation} \label{eq:antisym-add} \sum_{ A,B } h_{ A B} \overline{H_{ A B}}=\frac 12 \sum_{A , B} h_{ A B}  \overline{(H_{ A B}- H_{B A})}  =
    \sum_{A ,B   } h_{A B} \overline{H_{[A B ] }}
   \end{equation}
 by changing indices and $h_{B A}=-h_{ A B}$.

 (3)  This is because
   \begin{equation}\label{eq:tensor-antisym}\begin{split}
      \sum_{A ,B   }  h_{B A } \overline{H_{ A  B  }}&= \sum_{A ,B   } h_{A B } \overline{H_{ A  B   }}+ \sum_{A ,B   }(h_{B A }- h_{A B }) \overline{H_{ A  B   }}
  \end{split} \end{equation}
  and the second term in the right hand side is
$
      - 2 \sum_{A ,B   } h_{[A B] }  \overline{H_{ A  B   }}=- 2 \sum_{A ,B   } h_{[A B] }  \overline{H_{ [A  B]   }}
  $
  by  the identity   (\ref{eq:antisym-add}).
\end{proof}

\begin{lem} \label{lem:T*-2} For $f\in C_0^\infty (\mathbb{R}^{4 },
\mathbb{C}^{2 } \otimes\mathbb{C}^{2 })$, we have
   \begin{equation}\label{eq:T*-2}
   ({\mathscr D}_0^*f)_{A' B'}=\sum_{A=0, 1} \delta_{(A ' }^{ A} f_{ B') A} .
   \end{equation}
\end{lem}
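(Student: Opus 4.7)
My plan is to verify the formula by directly computing $\langle {\mathscr D}_0 \phi, f\rangle_\varphi$ for an arbitrary test function $\phi \in C_0^\infty(\mathbb{R}^4, \odot^2\mathbb{C}^2)$ and matching the result against the definition of the formal adjoint. The calculation is essentially an integration by parts together with the symmetrisation lemma (Lemma \ref{lem:sym}(1)), so the proof is short; the only care needed is with the primed/unprimed index bookkeeping.

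\textbf{Step 1: expand and integrate by parts.} Using the component formula (\ref{eq:k-CF-0}) for ${\mathscr D}_0 \phi$ together with the inner-product formula on $\mathbb{C}^2\otimes \mathbb{C}^2$, I write
\begin{equation*}
   \langle {\mathscr D}_0 \phi, f\rangle_\varphi = \sum_{A, A'} \bigl( ({\mathscr D}_0 \phi)_{A'A}, f_{A'A}\bigr)_\varphi = \sum_{A,A',B'} \bigl( Z_A^{B'}\phi_{B'A'}, f_{A'A}\bigr)_\varphi.
\end{equation*}
Applying the adjoint relation (\ref{eq:adjoint-delta}) to each scalar vector field $Z_A^{B'}$, this becomes $\sum_{A,A',B'}(\phi_{B'A'}, \delta_{B'}^A f_{A'A})_\varphi$, which I view as an $\otimes^2 \mathbb{C}^2$-pairing between the symmetric tensor $\phi$ and the tensor with components $H_{B'A'} := \sum_A \delta_{B'}^A f_{A'A}$.

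\textbf{Step 2: symmetrise and identify the adjoint.} Since $\phi_{B'A'} = \phi_{A'B'}$, Lemma \ref{lem:sym}(1) lets me replace $H_{B'A'}$ by its symmetrisation in the primed indices, giving
\begin{equation*}
\langle {\mathscr D}_0 \phi, f\rangle_\varphi = \sum_{A',B'} \bigl(\phi_{A'B'}, H_{(A'B')}\bigr)_\varphi = \sum_{A',B'} \Bigl(\phi_{A'B'}, \sum_{A=0,1}\delta_{(A'}^{A} f_{B')A}\Bigr)_{\varphi}.
\end{equation*}
Because this identity holds for every $\phi\in C_0^\infty(\mathbb{R}^4, \odot^2\mathbb{C}^2)$, and because the tensor $\sum_A \delta_{(A'}^A f_{B')A}$ is manifestly symmetric in $(A',B')$ (hence a legitimate $\odot^2\mathbb{C}^2$-valued function), the uniqueness of the formal adjoint forces (\ref{eq:T*-2}).

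The only possible obstacle is an indexing slip — in particular, making sure that the symmetrisation in Lemma \ref{lem:sym}(1) is performed on the correct pair of primed indices (the pair paired with $\phi$, not the pair involving $A$) and that the unprimed index $A$ from $Z_A^{B'}$ is the same as the one appearing in $\delta_{B'}^A$. Once the bookkeeping is kept straight, no further analytic content is needed beyond what is already established in Proposition \ref{prop:unitary-gamma} and Lemma \ref{lem:sym}.
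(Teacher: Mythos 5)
Your proposal is correct and follows essentially the same route as the paper: pair ${\mathscr D}_0\phi$ with $f$, move each $Z_A^{B'}$ across via (\ref{eq:adjoint-delta}), and invoke Lemma \ref{lem:sym}(1) to replace $\sum_A \delta_{B'}^A f_{A'A}$ by its symmetrisation in the primed indices, identifying the adjoint by density. The index bookkeeping in your Step 2 (symmetrising the pair $(A',B')$ paired with $\phi$, with $A$ summed) matches the paper's computation exactly.
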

\begin{proof} For any $g\in C_0^\infty (\mathbb{R}^{4 },\odot^{ 2
}
 \mathbb{C}^{2 } ) $,  we have
   \begin{equation*}\begin{split}
   \langle {\mathscr D}_0 g,f\rangle_{\varphi }&= \sum_{A,A', B'}  \left(Z^{A ' }_{ A}  g_{ A' B'},   { f_{ B' A  }}\right)_{ \varphi }
    = \sum_{A,A', B'}  \left(   g_{ A' B'},  \delta_{ A ' }^{ A}  { f_{B' A   }}\right)_{ \varphi } \\&
     = \sum_{A',  B'}  \left(   g_{ A' B'}, \sum_{A} \delta_{( A ' }^{ A}  { f_{B')  A}}\right)_{ \varphi }=\langle g,{\mathscr D}_0^*f\rangle_{\varphi }
 \end{split}  \end{equation*}
 by using (\ref{eq:adjoint-delta}) and Lemma \ref{prop:unitary-gamma} (1).  Here we have to symmetrise $(A'B')$ in $\sum_{A} \delta_{  A ' }^{ A}  { f_{B'   A}} $ since  only after symmetrisation it becomes an element of $C_0^\infty (\mathbb{R}^{4 },\odot^{ 2
}
 \mathbb{C}^{2 } )$, i.e. a  $\odot^{2
}\mathbb{C}^{2 }$-valued function.
\end{proof}

      \begin{thm}
    Suppose that there exist a constant $c>0$ such that the weight $\varphi$ satisfies
    \begin{equation}\label{eq:pseudoconvex}
     \sum_{A,B,A',B'}    {Z^{ A ' }_{ B}} \overline{Z^{B ' }_{ A}}\varphi(x) \cdot \xi_{ A'A }   \overline{\xi_{   B' B }} \geq c \sum_{A ,A' }|\xi_{ A'A }|^2.
    \end{equation} for any $x\in  \mathbb{R}^{4n}$ and $(\xi_{ A' A  })\in \mathbb{C}^{2  } \otimes \mathbb{C}^{2  }$.
    Then we have the weighted $L^2$ estimate
    \begin{equation}\label{eq:L2}
 c \left \| f \right\|^2_{  \varphi  }  \leq       \left\| {\mathscr D}_0^*f  \right\|^2_{\varphi } + \left\|{\mathscr D}_1f \right\|^2_{\varphi },
    \end{equation} for  any $ f\in {\rm Dom} ({\mathscr D}_0^* )\cap  {\rm Dom}  ({\mathscr D}_1)$.
   \end{thm}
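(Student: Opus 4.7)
\emph{Plan.} I would run the Hörmander $L^2$ / Bochner--Kodaira integration-by-parts identity from several complex variables, adapted to the symmetric--antisymmetric setting of the $2$-Cauchy--Fueter complex on $\mathbb{R}^4$. All the computation is carried out for $f \in C_0^\infty(\mathbb{R}^4, \mathbb{C}^2\otimes\mathbb{C}^2)$; the extension to ${\rm Dom}(\mathscr D_0^*)\cap{\rm Dom}(\mathscr D_1)$ follows from a Friedrichs mollifier argument, which is standard here because Lemma \ref{lem:T*-2} realises $\mathscr D_0^*$ as a first-order differential operator with smooth coefficients, making $C_0^\infty$ a common core in the graph norm.

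For smooth $f$, I would expand both norms into bilinear pairings of the $Z^{A'}_A$ fields. From (\ref{eq:k-CF-1}) and the antisymmetry of $(\mathscr D_1 f)_{AB}$ in $(A,B)$ (Lemma \ref{lem:sym}(2)),
\begin{equation*}
\|\mathscr D_1 f\|^2_\varphi = 2\sum \bigl(Z^{A'}_A f_{BA'},\, Z^{B'}_A f_{BB'}\bigr)_\varphi - 2\sum \bigl(Z^{A'}_A f_{BA'},\, Z^{B'}_B f_{AB'}\bigr)_\varphi ;
\end{equation*}
while for $\|\mathscr D_0^* f\|^2_\varphi$, Lemma \ref{lem:T*-2} combined with Lemma \ref{lem:sym}(1) (to absorb the $(A',B')$-symmetrisation onto one factor) and (\ref{eq:delta-adjoint}) (to move one $\delta$ across the inner product) produces a similar collection of $(Zf, Zf)_\varphi$ pairings, which Lemma \ref{lem:sym}(3) then allows me to combine with those above into a single uniform sum.

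The decisive algebraic input is the commutator identity
\begin{equation*}
[\delta^A_{A'},\, Z^{B'}_B] \;=\; 2\,Z^{B'}_B Z^A_{A'}\varphi \qquad (\text{multiplication operator}),
\end{equation*}
obtained immediately from (\ref{eq:adjoint-delta00}), the constant-coefficient commutativity $[Z^A_{A'},Z^{B'}_B]=0$, and the elementary identity $[M_g,P]=-M_{Pg}$. Shuttling $Z^{B'}_B$ past $\delta^A_{A'}$ in the combined expansion of $\|\mathscr D_0^* f\|^2_\varphi+\|\mathscr D_1 f\|^2_\varphi$ does two things: the pure-derivative cross-terms collapse into a pointwise nonnegative sum of squared $L^2$-norms (of $\delta^A_{A'} f_{B'B}$-type quantities), and the commutator contributions assemble into the Hermitian pairing
\begin{equation*}
2\sum_{A,B,A',B'} \bigl((Z^{B'}_B Z^A_{A'}\varphi)\, f_{A'A},\, f_{B'B}\bigr)_\varphi .
\end{equation*}
Applying $\overline{Z^{B'}_A} = -Z^A_{B'}$ from (\ref{eq:overline}) and relabelling the dummy indices, this pairing is (up to a positive constant) exactly the left-hand side of the hypothesis (\ref{eq:pseudoconvex}) with $\xi = f$, whence it is bounded below by $c\|f\|^2_\varphi$, giving (\ref{eq:L2}).

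The main obstacle --- the only real work in the argument --- is the algebraic bookkeeping of this reorganisation: verifying through all three parts of Lemma \ref{lem:sym} that the non-commutator cross-terms in the combined expansion genuinely organise into a pointwise nonnegative quadratic in the derivatives of $f$, leaving behind exactly the hypothesised curvature form. This is the natural $2$-Cauchy--Fueter analogue of the classical Bochner--Kodaira identity for $\overline\partial$ on $(0,1)$-forms, with the asymmetric roles of primed and unprimed spinor indices making the identification of curvature and non-curvature pieces less transparent than in the complex case.
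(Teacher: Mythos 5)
Your route is the paper's own: work with $f\in C_0^\infty$ (the reduction via closedness of $\mathscr D_0^*$, $\mathscr D_1$ as differential operators is fine), expand the two norms, integrate by parts with (\ref{eq:adjoint-delta})--(\ref{eq:delta-adjoint}), reorder a derivative past a $\delta$, and read the hypothesis (\ref{eq:pseudoconvex}) off the commutator while the remaining first-order terms organise into nonnegative squares --- this is exactly the identity (\ref{eq:bochner}). But at the decisive step your sketch is not correct as written. The commutator that actually arises from expanding $\langle \mathscr D_0\mathscr D_0^* f, f\rangle_\varphi$ is $[Z^{A'}_B,\delta^A_{B'}]=2Z^{A'}_B\overline{Z^{B'}_A}\varphi$ as in (\ref{eq:commutator}), i.e.\ with the plain derivative on the left, and it contributes $+2\sum\bigl(Z^{A'}_B\overline{Z^{B'}_A}\varphi\cdot f_{AA'},f_{BB'}\bigr)_\varphi$, which is the hypothesis form. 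The pairing you assert, $2\sum\bigl((Z^{B'}_BZ^{A}_{A'}\varphi)f_{A'A},f_{B'B}\bigr)_\varphi$, equals $-2\sum\bigl((Z^{B'}_B\overline{Z^{A'}_A}\varphi)f_{A'A},f_{B'B}\bigr)_\varphi$, and no relabelling of dummy indices turns this into (\ref{eq:pseudoconvex}): for $\varphi=|x|^2$ a computation as in Lemma \ref{lem:assumption} gives $Z^{B'}_B\overline{Z^{A'}_A}|x|^2=4\delta_{AB}\delta_{A'B'}$, so your assembled ``curvature'' term is $-8\|f\|^2_\varphi$ --- the wrong sign for a lower bound. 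The fix is precisely to keep the operators in the order they occur ($Z$ outside, $\delta$ inside) and use $[Z,\delta]$, not $[\delta,Z]$.

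The second issue is the bookkeeping you defer as ``the only real work,'' because it conceals the one genuine choice in the argument. In $\langle\mathscr D_0\mathscr D_0^*f,f\rangle_\varphi$ the symmetrisation over the primed indices must be split: only the cross part may be reordered by the commutator, while the diagonal part must be kept untouched as the square $\Sigma_0=\sum_{A',B'}\|\sum_A\delta^A_{A'}f_{B'A}\|^2_\varphi$ of (\ref{eq:Sigma_0}). If you ``shuttle $Z^{B'}_B$ past $\delta^A_{A'}$'' uniformly, the diagonal part produces the derivative cross term $\sum\bigl(Z^{A'}_Bf_{AB'},Z^{A'}_Af_{BB'}\bigr)_\varphi$, which your expansion of $\|\mathscr D_1f\|^2_\varphi$ does not compensate and which, as the paper points out in Remark \ref{rem:bochner}(1), cannot be controlled by $\|\mathscr D_0^*f\|_\varphi$ and $\|\mathscr D_1f\|_\varphi$ over $\mathbb R^{4n}$. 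Once the split is made, the off-diagonal cross term is exactly where Lemma \ref{lem:sym}(3) enters, yielding $\sum_{A,B}\|\sum_{A'}Z^{A'}_Af_{BA'}\|^2_\varphi-\tfrac12\|\mathscr D_1f\|^2_\varphi$; the $\mathscr D_1$-norm thus appears automatically, and your separate expansion of $\|\mathscr D_1f\|^2_\varphi$ is not needed. With these two corrections your plan becomes the paper's proof of (\ref{eq:bochner}) and hence of (\ref{eq:L2}).
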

\begin{proof} By definition, we have
  $
  {\rm Dom} ({\mathscr D}_1 ): =\left\{ f\in L_\varphi^2( \mathbb{R}^{4n}, \mathscr V_1 );   {\mathscr D}_1 f\in L_\varphi^2( \mathbb{R}^{4n},\mathscr V_2 ) \right\}
$. Then ${\mathscr D}_1$ is  densely-defined since $C_0^\infty(\mathbb{R}^{4n},
 \mathscr V_1)$ is contained in its domain. It is also closed since differentiation is   continuous on distributions.  So is ${\mathscr D}_0^*$ as a differential operator given by (\ref{eq:T*-2}). Therefore it is sufficient to show   (\ref{eq:L2}) for $f\in C_0^\infty( \mathbb{R}^{4n},
 \mathscr V_1 )  $. It follows from the definition of ${\mathscr D}_0$ in (\ref{eq:k-CF-0}), ${\mathscr D}_0^*$ in Lemma \ref{lem:T*-2} and the definition of symmetrisation that
 \begin{equation}\label{eq:Sigma}\begin{split}2\langle {\mathscr D}_0^*f, {\mathscr D}_0^*f\rangle_{\varphi } &=2\left\langle{\mathscr D}_0 {\mathscr D}_0^*f,f\right\rangle_{\varphi }=    2 \sum_{B,B'}\left(\sum_{A' }Z_{B}^{A'}\sum_{ A }\delta_{(A  ' }^{ A}  f_{ B')A } ,   f_{   B' B  }\right)_{\varphi } \\& =  \sum_{A,B,A',B'}\left( Z_{B}^{A'}\delta_{ A  ' }^{ A}  f_{  B' A } ,   f_{ B' B  }\right)_{\varphi } +\sum_{A,B,A',B'}\left(   Z_{B}^{A'}\delta_{B ' }^{ A}  f_{ A'A },    f_{ B'B }\right)_{\varphi }:= \Sigma_0+\Sigma_1,\end{split}\end{equation}
  where \begin{equation}\label{eq:Sigma_0}\begin{split}\Sigma_0= \sum_{ A',B'}\left(\sum_{A  }\delta_{ A  ' }^{ A}  f_{ B' A } , \sum_{B }\delta^{B}_{A'}  f_{ B' B  }\right)_{\varphi }
  =\sum_{ A',B'} \left\|\sum_{A }  \delta_{ A  ' }^{ A}  f_{  B' A }\right\|_\varphi^2 \geq 0, \end{split}\end{equation}
and
  \begin{equation}\label{eq:Sigma_1} \begin{split}
  \Sigma_1 & =     \sum_{A,B,A',B'}\left\{\left( \delta_{B ' }^{ A} Z^{ A ' }_{ B} f_{ A A' }, f_{ B    B' }\right)_{\varphi }   +  \left(  [ Z^{ A ' }_{ B} ,\delta_{B ' }^{ A} ] f_{ A A' }, f_{ B    B' }\right)_{\varphi }\right\}\\&=   \sum_{A,B,A',B'}\left\{\left( Z^{ A ' }_{ B} f_{ A A' },Z^{B ' }_{ A} f_{ B    B' }\right)_{\varphi }+ 2\left( Z^{ A ' }_{ B}\overline{Z^{B ' }_{ A}}\varphi\cdot f_{ A A' }, f_{ B    B' }\right)_{\varphi }\right\} ,
   \end{split}\end{equation} by using the formal adjoint operator (\ref{eq:delta-adjoint}),  relabeling indices and using the commutator
    \begin{equation}\label{eq:commutator}
      \left [Z^{ A ' }_{ B},\delta_{B ' }^{ A}\right ]=- 2Z^{ A ' }_{ B}Z_{B ' }^{ A}\varphi=  2Z^{ A ' }_{ B}\overline{Z^{B ' }_{ A}}\varphi,
   \end{equation} which follows from  (\ref{eq:overline})-(\ref{eq:adjoint-delta00}) and the commutativity $Z^{ A ' }_{ B}Z_{B ' }^{ A}=Z_{B ' }^{ A}Z^{ A ' }_{ B}$ as scalar differential operators of
constant coefficients.
  The first summation in the right hand side of (\ref{eq:Sigma_1}) is equal to
 \begin{equation*}\begin{split}
    \sum_{A,B,A',B'}\left( Z^{ A ' }_{ B} f_{ A A' },Z^{B ' }_{ A} f_{ B    B' }\right)_{\varphi }
   & =  \sum_{A,B }\left( \sum_{ A' } Z^{ A ' }_{ B} f_{ A A' }, \sum_{ B'} Z^{B ' }_{ A} f_{ B    B' }\right)_{\varphi }\\&
  = \sum_{A,B }\left\|\sum_{ A '} Z^{ A ' }_{ A} f_{ B   A' }  \right\|^2_{\varphi }-2\sum_{A,B }\left\|\sum_{ A '} Z^{ A ' }_{ [A} f_{B]   A' } \right\|^2_{\varphi } \\&
  = \sum_{A,B }\left\|\sum_{ A '} Z^{ A ' }_{ A} f_{ B   A' }  \right\|^2_{\varphi }-\frac 12 \left\|\mathscr D_1 f   \right\|^2_{\varphi }
 \end{split} \end{equation*}
  by applying (\ref{eq:antisym}) with $h_{BA}=  \sum_{ A' } Z^{ A ' }_{ B} f_{ A A' }$, $H_{A B}=\sum_{ B'} Z^{B ' }_{ A} f_{ B    B' } $. Now substituting (\ref{eq:Sigma_0})-(\ref{eq:Sigma_1}) into (\ref{eq:Sigma}) and using the above identity, we get
  \begin{equation}\label{eq:bochner}\begin{split}
2\left\|\mathscr D_0^* f   \right\|^2_{\varphi }+  \frac 12  \left\|\mathscr D_1 f   \right\|^2_{\varphi }=&2 \sum_{A,B,A',B'}\left(  Z^{ A ' }_{ B}\overline{Z^{B ' }_{ A}} \varphi\cdot f_{ A A' }, f_{ B    B' }\right)_{\varphi }\\& +\sum_{ A',B'} \left\|\sum_{A }  \delta_{ A  ' }^{ A}  f_{  B' A }\right\|_\varphi^2+\sum_{A,B }\left\|\sum_{ A '} Z^{ A ' }_{ A} f_{ B   A' }  \right\|^2_{\varphi }.
  \end{split} \end{equation} Now the resulting estimate follows from the assumption (\ref{eq:pseudoconvex}) for $\varphi$.
\end{proof}
  \begin{rem}\label{rem:bochner} (1) We do not handle the term $\Sigma_0$ in (\ref{eq:Sigma_0}) by using    commutators. Because if we do so
\begin{equation*}\begin{split}
    \Sigma_0 &
        =\sum_{A , B,A',B'} ( \delta_{ A  ' }^{ A} Z^{ A  ' }_{B} f_{ A   B' }, f_{ B   B' })+( [Z^{ A  ' }_{B},  \delta_{ A  ' }^{ A}]  f_{ A   B' }, f_{ B   B' })\\&
    =\sum_{A,B,A',B'}\left( Z^{ A ' }_{ B} f_{ A   B' } , Z^{ A  ' }_{ A}f_{ B     B' }\right)_{\varphi }+2(  Z^{ A  ' }_{B}   \overline{Z^{ A  ' }_{ A}} \varphi  f_{ A   B' }, f_{ B   B' }),
       \end{split}\end{equation*}
       the first term in the right hand side above is quite difficult to control. But over $\mathbb{R}^4$ it can be controlled in terms of ${\mathscr D}_0^*f$ and  ${\mathscr D}_1f$. Based on such estimates, we can solve the  Neumann problem for the   $k$-Cauchy-Fueter complexes over  $k$-pseudoconvex domains in $\mathbb{R}^4$  (cf. \cite{Wa16}).

       (2) $\varphi=|x|^2$ satisfies the  assumption (\ref{eq:pseudoconvex}) for $\varphi$ with $c=4$ by the following Lemma \ref{lem:assumption}.
\end{rem}

  \section{The canonical solution operator to the nonhomogeneous     $k$-Cauchy-Fueter equation}

   \subsection{The weighted $L^2$  estimate in the general case}

Recall that the
{\it symmetric power} $ \odot^{k
}\mathbb{C}^{2 } $ is a subspace of $ \otimes^{k
}\mathbb{C}^{2 } $, and an element of $\odot^{k} \mathbb{C}^2 $ is given by a $2^k $-tuple  $
(  f_{A_1' \ldots A_k'})\in \otimes^{k} \mathbb{C}^2$ with $A_1' \ldots A_k'   =0',1' $, where
$  f_{A_1' \ldots A_k'} $ is invariant under  permutations of
subscripts, i.e.
\begin{equation*}
     f_{A_1' \ldots A_k'} =  f_{   A_{\sigma(1)}'\ldots A_{\sigma(k)} '},
 \end{equation*} for any $\sigma\in S_k$, the group of permutations of $k$ letters.  Note that $\dim (\odot^{k} \mathbb{C}^2) =k+1$ (cf. (\ref{eq:Dk})) while $\dim (\otimes^{k} \mathbb{C}^2) =2^k $.
An element of the exterior power $\wedge^{2}
\mathbb{C}^{2n} $ is given by  a tuple  $ (  f_{AB})$ with  $  f_{AB}= - f_{B A}$,
$A, B= 0,\ldots, 2n -1$. An element of $\odot^{k-1} \mathbb{C}^2\otimes \mathbb{C}^{2n} $ is given by a tuple  $
(  f_{A_2' \ldots A_k'A})\in \otimes^{k-1}\mathbb{C}^2\otimes \mathbb{C}^{2n}$, which
  is invariant under   permutations of $A_2' ,\ldots, A_k'$.
We will use  symmetrisation   of primed indices
\begin{equation}\label{eq:sym-0}\begin{split}
     f_{\cdots (A_1'\ldots A_k')\cdots}:&=\frac 1 {k!}\sum_{\sigma\in S_k}  f_{\cdots  A_{\sigma(1)}'\ldots A_{\sigma(k)} '\cdots}.
\end{split}\end{equation}

 The first two operators in  $k$-Cauchy-Fueter complex (\ref{eq:quaternionic-complex-diff})-(\ref{eq:quaternionic-complex-diff-V}) over $\mathbb{R}^{4n}$ are given by
\begin{equation}\label{eq:k-CF-}\begin{split}
    & ({\mathscr D}_0 f)_{ A_2'\ldots A_{k  }'A }:=\sum_{A_1'=0',1'}Z^{A_1'}_Af_{ A_1'A_2'\ldots A_{k }'  }=Z^{0'}_Af_{0'A_2'\ldots A_{k  }'  }+Z^{1'}_Af_{1'A_2'\ldots A_{k  }'  },\\&
    ({\mathscr D}_1 h)_{AB A_3'\ldots A_{k  }'}:=2 \sum_{A'=0',1'} Z^{A'}_{[ A}h_{B]A' A_3'\ldots A_{k  }' }=\sum_{A'=0',1'}\left(Z^{A'}_{ A}h_{BA' A_3'\ldots A_{k  }' }-Z^{A'}_{B}h_{ AA'A_3'\ldots A_{k  }' }\right ),
\end{split}\end{equation} for $f\in C^1( \mathbb{R}^{4n},  \mathscr  V_0   )$, $h\in  C^1( \mathbb{R}^{4n}, \mathscr  V_1   )$, where  $ A,B =0,1,\ldots , 2n-1$, $ A_2',\ldots, A_{k  }'=0',1'$. Here and in the sequel, we write $h_{ AA_2'A_3'\ldots A_{k  }' }:=h_{A_2'A_3'\ldots A_{k  }' A}$ for convenience. It is direct to check that
${\mathscr D}_1\circ {\mathscr D}_0=0$ as (\ref{eq:exact}).

The weighted inner product of $  L_\varphi^2( \mathbb{R}^{4n}, \mathscr  V_0  )$ is induced from that of $  L_\varphi^2( \mathbb{R}^{4n},  \otimes^k \mathbb{C}^2  )$.  Namely  we define
 \begin{equation*}
  \left \langle f, h\right\rangle_{\varphi }:=\sum_{   A_1', \ldots,A_k'}\left(  f_{     A_1'\ldots A_k' } ,   h_{  A_1'\ldots A_k' }\right)_{\varphi }
 \end{equation*}for $f, h\in L_\varphi^2( \mathbb{R}^{4n}, \mathscr  V_0  )$, and $\|f\|_\varphi= \left \langle f,  {f}\right\rangle_{\varphi }^{\frac 12}$.
We define the  weighted induced inner products of $ L_\varphi^2( \mathbb{R}^{4n}, \mathscr  V_1  )$ and $ L_\varphi^2( \mathbb{R}^{4n}, \mathscr  V_2 )$ similarly.

\begin{lem} \label{lem:T*} For $f\in C_0^\infty (\mathbb{R}^{4n},  \mathscr  V_1 )$, we have
   \begin{equation}\label{eq:T*}
   ({\mathscr D}_0^*f)_{A_1' A_2'\ldots A_k'}=\sum_{A=0}^{2n-1} \delta_{(A_1 ' }^{ A} f_{ A_2'\ldots A_k') A } .
   \end{equation}
\end{lem}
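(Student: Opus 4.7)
The plan is to imitate the proof of Lemma \ref{lem:T*-2}, extending the two-index symmetrisation argument to $k$-fold symmetric tensors. The formula for $\mathscr{D}_0^{*}f$ is read off from the defining identity $\langle \mathscr{D}_0 g, f\rangle_{\varphi} = \langle g, \mathscr{D}_0^{*}f\rangle_{\varphi}$ for all $g\in C_0^\infty(\mathbb{R}^{4n},\mathscr V_0)$, and the main point is that the partial derivatives $\delta_{A_1'}^{A}f_{A_2'\ldots A_k' A}$ produced by integration by parts must be symmetrised in the primed indices before we can compare components with the $\odot^k\mathbb{C}^2$-valued function $g$.

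First I would unpack the left side using the definition of $\mathscr{D}_0$ in (\ref{eq:k-CF-}) and the inner product on $L^2_\varphi(\mathbb{R}^{4n},\mathscr V_1)$:
\begin{equation*}
\langle \mathscr{D}_0 g,f\rangle_\varphi
= \sum_{A_2',\ldots,A_k',A}\Bigl(\sum_{A_1'} Z^{A_1'}_A g_{A_1'A_2'\ldots A_k'},\; f_{A_2'\ldots A_k' A}\Bigr)_\varphi.
\end{equation*}
Then I would apply the adjoint identity (\ref{eq:adjoint-delta}) componentwise, transferring each $Z^{A_1'}_A$ onto $f$ to obtain $\delta_{A_1'}^{A}f_{A_2'\ldots A_k' A}$, so that the expression becomes
\begin{equation*}
\sum_{A_1',\ldots,A_k'}\Bigl(g_{A_1'\ldots A_k'},\; \sum_{A}\delta_{A_1'}^{A}f_{A_2'\ldots A_k' A}\Bigr)_\varphi.
\end{equation*}

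Next I would invoke the analogue of Lemma \ref{lem:sym} (1) for $k$ primed indices: since $g_{A_1'\ldots A_k'}$ is totally symmetric, only the symmetric part of the second factor contributes. This is proved exactly as in the $k=2$ case by relabeling the summation indices via each $\sigma\in S_k$ and invoking the symmetry of $g$, giving
\begin{equation*}
\sum_{A_1',\ldots,A_k'}(g_{A_1'\ldots A_k'},H_{A_1'\ldots A_k'})_\varphi
= \sum_{A_1',\ldots,A_k'}(g_{A_1'\ldots A_k'},H_{(A_1'\ldots A_k')})_\varphi
\end{equation*}
for any $H\in L^2_\varphi(\mathbb{R}^{4n},\otimes^k\mathbb{C}^2)$. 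Applied to $H_{A_1'\ldots A_k'}=\sum_{A}\delta_{A_1'}^{A}f_{A_2'\ldots A_k' A}$, this produces $\sum_{A}\delta_{(A_1'}^{A}f_{A_2'\ldots A_k') A}$ in the second slot.

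Finally, comparing with $\langle g,\mathscr{D}_0^{*}f\rangle_\varphi$ and using that $C_0^\infty(\mathbb{R}^{4n},\mathscr V_0)$ is dense in the domain (and that $g$ ranges over all totally symmetric tuples) yields (\ref{eq:T*}). The only step requiring any thought is the symmetrisation identity for $k$-tensors, but it is a routine extension of the $k=2$ calculation and presents no real obstacle; the rest is bookkeeping of indices.
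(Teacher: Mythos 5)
Your proposal is correct and follows essentially the same route as the paper: unpack $\langle \mathscr{D}_0 g, f\rangle_\varphi$, move each $Z^{A_1'}_A$ onto $f$ via (\ref{eq:adjoint-delta}) to produce $\delta_{A_1'}^{A}f_{A_2'\ldots A_k'A}$, and then use the $k$-index generalisation of Lemma \ref{lem:sym} (1) (proved by relabeling over $S_k$ and the total symmetry of $g$) to replace it by its symmetrisation, which is exactly the paper's argument including its identity (\ref{eq:bracket-omit}).
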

\begin{proof} For any $g\in C_0^\infty (\mathbb{R}^{4n}, \mathscr  V_0 )$ we have
   \begin{equation*}\begin{split}
   \langle {\mathscr D}_0 g,f\rangle_{\varphi }&= \sum_{A,A_2',  \ldots, A_k'}  \left(\sum_{ A_1' }  Z^{A_1 ' }_{ A}  g_{ A_1'  \ldots A_k'},   { f_{ A_2'\ldots A_k'A }}\right)_{ \varphi }
    = \sum_{A,  A_1',\ldots, A_k'}  \left(   g_{ A' A_1'\ldots A_k'},  \delta_{ A_1 ' }^{ A}  { f_{ A_2'\ldots A_k'A }}\right)_{ \varphi } \\&
     = \sum_{   A_1',\ldots, A_k'}  \left(   g_{   A_1'\ldots A_k'}, \sum_{A} \delta_{( A_1 ' }^{ A}  { f_{  A_2'\ldots A_k')  A }}\right)_{ \varphi }=\langle g,{\mathscr D}_0^*f\rangle_{\varphi }
 \end{split}  \end{equation*}
by using  (\ref{eq:adjoint-delta})  and symmetrisation
\begin{equation}\label{eq:bracket-omit}\sum_{ A_1', \ldots, A_k'
   } \left(   g_{  A_1' \ldots A_k' }, G_{  A_1' \ldots A_k' }\right)_{\varphi }=
  \sum_{A_1', \ldots, A_k'} \left(    g_{  A_1' \ldots A_k' },G_{ (A_1' \ldots A_k')}\right)_{\varphi }
\end{equation}
for any   $  g \in    L_\varphi^2(  \mathbb{R}^{4 n},  \odot^k \mathbb{C}^{2 }   )$,  $  G\in    L_\varphi^2(  \mathbb{R}^{4 n},  \otimes^k \mathbb{C}^{2 }   )$. Here we have to symmetrise indices $( A_1 ' \ldots A_k')$ in $\sum_{A} \delta_{ A_1 ' }^{ A}  { f_{  A_2'\ldots A_k'  A }}$  since  only after symmetrisation it becomes an element of  $ C_0^\infty (\mathbb{R}^{4n}, \mathscr  V_0 )$, i.e. a  $\odot^{k
}\mathbb{C}^{2 }$-valued function. (\ref{eq:bracket-omit}) is a generalization of Lemma \ref{lem:sym} (1). It holds because
\begin{equation*}
   R.H.S.=\frac 1 {k!}\sum_{A_1', \ldots, A_p'} \sum_{\sigma\in S_k}\left(    g_{  A_1' \ldots A_k' },G_{  A_{\sigma(1)}' \ldots A_{\sigma(1)}' }\right)_{\varphi }=\frac 1 {k!}\sum_{\sigma\in S_k}\sum_{A_1', \ldots, A_k'} \left(    g_{  A_{\sigma^{-1}(1)}' \ldots A_{\sigma^{-1}(k)}'  },G_{ A_1' \ldots A_k' }\right)_{\varphi }
\end{equation*}by relabeling  indices, which equals to L.H.S. by $g$ symmetric in the indices, i.e. $g_{  A_{\sigma^{-1}(1)}' \ldots A_{\sigma^{-1}(k)}'  }=g_{  A_{1}' \ldots A_{k}'  }$ for any permutation $\sigma$.
\end{proof}

{\it Proof of Theorem \ref{thm:L2}}. As in the model case $n=1$, $k=2$, it is sufficient to show the weighted $L^2$-estimate (\ref{eq:L2-n}) for $f\in C_0^\infty( \mathbb{R}^{4n},   \odot^{k-1}
 \mathbb{C}^{2 } \otimes\mathbb{C}^{2n})$. Recall that if $(F_{A_1'\ldots A_k'})\in \otimes^k \mathbb{C}^{2 }$ is symmetric in $A_2'\ldots A_k'$, then we have
 \begin{equation}\label{eq:sym-1}
   F_{(A_1'\ldots A_k')}=\frac 1k (F_{A_1'A_2'\ldots A_k'}+\cdots+ F_{A_s'A_2'\ldots A_1'\ldots A_k'}+\cdots+ F_{A_k'A_2'\ldots A_1' } ),
 \end{equation}
 by definition of symmetrisation (\ref{eq:sym-0}).
 Now we expand the symmetrisation to get
   \begin{equation*}\begin{split}
k\langle {\mathscr D}_0^*f, & {\mathscr D}_0^*f\rangle_{\varphi } =  k\langle {\mathscr D}_0 {\mathscr D}_0^*f,f\rangle_{\varphi } \\& = k \sum_{ B, A_2', \ldots,A_k'}\left (\sum_{   A_1' }Z_{B}^{A_1'}\sum_{ A }\delta_{(A_1 ' }^{ A}  f_{  A_2'\ldots A_k') A  } ,  f_{  A_2'\ldots A_k' B } \right)_{\varphi }\\& = \sum_{A,B, A_1', \ldots,A_k'} \left( Z_{B}^{A_1'} \delta_{ A_1 ' }^{ A} f_{ A_2'\ldots  A_k' A } ,    f_{ A_2'\ldots A_k'B }\right)_{\varphi }+  \sum_{A,B, A_1', \ldots,A_k'}\sum_{s=2}^k\left( Z_{B}^{A_1'}\delta_{ A_s ' }^{ A} f_{\ldots A_1'\ldots A_k' A } ,
   f_{ A_2'\ldots A_k' B  }\right)_{\varphi } \\&  =:\Sigma_0 +\Sigma_1, \end{split}\end{equation*} by the adjoint operator ${\mathscr D}_0^*$ in Lemma \ref{lem:T*}. Here we split the sum into the cases $s=1$ and $s\geq 2$ as in the model case (cf.  Remark \ref{rem:bochner}).  Note that
\begin{equation*}
   \Sigma_0 = \sum_{  A_1', \ldots,A_k'} \left(  \sum_{A}\delta_{ A_1 ' }^{ A} f_{  A_2'\ldots  A_k'  A } , \sum_{B} \delta_{ A_1 ' }^{ B}  f_{ A_2'\ldots A_k' B }\right)_{\varphi } \geq 0
\end{equation*} by using (\ref{eq:adjoint-delta}),  and
\begin{equation*}\begin{split}
    \Sigma_1  & =\sum_{s=2}^k \sum_{A,B, A_1', \ldots,A_k'}\left(\delta_{ A_s ' }^{ A} Z^{ A_1' }_{  B } f_{  A_2'\ldots A_1'\ldots A_k' A  } ,  f_{    A_2'\ldots A_k' B }\right)_{\varphi } +\left(\left [Z^{ A_1' }_{  B }, \delta_{ A_s ' }^{ A}\right ] f_{  A_2'\ldots A_1'\ldots A_k' A } , f_{ A_2'\ldots A_k' B }\right)_{\varphi } \\& =\Sigma_2' +\Sigma_2'' \end{split}\end{equation*}by using commutators.
For the second sum,
\begin{equation*}\begin{split}
   \Sigma_2''&= 2\sum_{s=2}^k\sum_{A,B, A_1', \ldots,A_k'} \left( Z^{ A_1' }_{  B }\overline{Z^{ A_s ' }_{ A}}\varphi \cdot f_{ A_2'\ldots A_1'\ldots A_k' A } , f_{ A_2'\ldots A_k'B }\right)_{\varphi }\\&= 8\sum_{s=2}^k\sum_{A,B, A_1', \ldots,A_k'} \left(\delta_{  B A} \delta_{A_1'A_s '} \cdot f_{ A_2'\ldots A_1'\ldots A_k'A } , f_{ A_2'\ldots A_k' B  }\right)_{\varphi }=8( {k-1})\|f\|_{\varphi }^2
\end{split}\end{equation*}for  $\varphi(x)=|x|^2$ by the following Lemma \ref{lem:assumption}  and $f$ symmetric in the primed indices. On the other hand,
\begin{equation*}\begin{split}
 \Sigma_2'&   = \sum_{s=2}^k\sum_{A,B, A_1', \ldots,\ldots, A_k'} \left( Z^{ A_1' }_{  B } f_{  A_2' \ldots A_1'\ldots A_k' A } , Z^{ A_s ' }_{ A} f_{ A_2'\ldots A_k' B }\right)_{\varphi }\\&= \sum_{s=2}^k\sum_{A,B  } \sum_{ \widehat{A}_1', \ldots,\widehat{A}_s',\ldots ,A_k'} \left(\sum_{  A_1' } Z^{ A_1' }_{  B } f_{    A_1' \ldots\widehat{A_s'}\ldots A_k' A  } , \sum_{  A_s' } Z^{ A_s ' }_{ A} f_{ A_s' A_2'\ldots \widehat{A_s'} \ldots A_k' B }\right)_{\varphi }\\&= (k-1) \sum_{B_3', \ldots,B_k'=0',1'}  \sum_{A,B}  \left( \sum_{A'} Z^{ A' }_{  B } f_{     A' B_3' \ldots  B_k' A } , \sum_{A'} Z^{A' }_{ A} f_{ A' B_3'\ldots B_k'  B  }\right)_{\varphi }\end{split}\end{equation*}  by $f$ symmetric in the primed indices and relabelling  indices. Then applying  Lemma \ref{lem:sym} (3) ((\ref{eq:tensor-antisym}) holds for $A,B=0,\ldots, 2n-1$) to the right hand side with $h_{BA}=\sum_{A'} Z^{ A' }_{  B } f_{ A    A' B_3' \ldots  B_k' } $ and $H_{AB}= \sum_{A'} Z^{ A'}_{ A} f_{ B  A' B_3'\ldots B_k' } $ for fixed $B_3', \ldots , B_k'$, we get
\begin{equation*}\begin{split}\Sigma_2'&
    =(k-1) \sum_{  B_3', \ldots,B_k'}  \sum_{A,B} \left\{ \left\|\sum_{A'} Z^{ A ' }_{  A} f_{B      A' B_3'\ldots B_k' }\right\|^2_{\varphi }-2  \left\|\sum_{A'} Z^{ A ' }_{ [ A} f_{B]  A'   B_3'\ldots B_k' }  \right\|_{\varphi }^2\right\}\\&
    =(k-1) \sum_{A,B, B_3', \ldots,B_k'} \left\|\sum_{A'}  Z^{ A ' }_{  A} f_{B    A'   B_3'\ldots B_p' }\right\|^2_{\varphi }-\frac {k-1}2  \left\| {\mathscr D}_1  f  \right\|^2_{\varphi }.
   \end{split}\end{equation*}
   Now we get
   \begin{equation*}
    k  \|{\mathscr D}_0^*f\|^2_{\varphi }+ \frac {k-1}2   \left\| {\mathscr D}_1  f  \right\|^2_{\varphi }\geq 8 (k-1)\|f\|_{\varphi }^2.
   \end{equation*}
   The estimate (\ref{eq:L2-n}) follows. \hskip 115 mm $\Box$

   \begin{lem}\label{lem:assumption} $ Z^{ A ' }_{ B} \overline{{Z^{B ' }_{ A}}} |x|^2  =4\delta_{   AB} \delta_{A'B'}$. In particular,
       $\varphi=|x|^2$ satisfies the  assumption (\ref{eq:pseudoconvex})  for $\varphi$ with $c=4$.
   \end{lem}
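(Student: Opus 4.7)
The plan is by direct computation: since each $Z_{AA'}$ is a first-order differential operator with constant coefficients, applying any product $Z^{A'}_B \overline{Z^{B'}_A}$ to $|x|^2$ produces a constant, which can be read off from the explicit formulas (\ref{eq:k-CF}) and (\ref{eq:k-CF-raised}). I would first use identity (\ref{eq:overline}) to rewrite
\[
Z^{A'}_B \overline{Z^{B'}_A}\, |x|^2 \;=\; -\, Z^{A'}_B Z^A_{B'}\, |x|^2,
\]
and then apply the Leibniz rule, using that the coefficients of these vector fields are constants, to obtain
\[
Z^{A'}_B Z^A_{B'} \sum_j x_j^2 \;=\; 2 \sum_j (Z^{A'}_B x_j)(Z^A_{B'} x_j),
\]
which is a constant depending on $A, B, A', B'$.

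Next I would exploit the block structure of the vector fields: from (\ref{eq:k-CF}), if $A \in \{2l, 2l+1\}$ then $Z_A^{A'}$ involves only $\partial_{x_{4l+1}}, \ldots, \partial_{x_{4l+4}}$. Hence the sum over $j$ vanishes unless $A$ and $B$ lie in the same $l$-block, so it suffices to establish the identity for $n=1$ with $A, B \in \{0,1\}$. For this finite case I would read off the four coefficient vectors of $Z_0^{0'}, Z_0^{1'}, Z_1^{0'}, Z_1^{1'}$ (each a vector in $\mathbb{C}^4$) from (\ref{eq:k-CF-raised}) and verify the $4 \times 4$ table
\[
\sum_{j=1}^{4} (Z^{A'}_B x_j)\, \overline{(Z^{B'}_A x_j)} \;=\; 2\,\delta_{AB}\,\delta_{A'B'}
\]
by inspection. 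Combined with the sign coming from (\ref{eq:overline}), this yields the claimed identity $Z^{A'}_B \overline{Z^{B'}_A}\, |x|^2 = 4\,\delta_{AB}\,\delta_{A'B'}$.

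For the second assertion, substituting this identity into the left-hand side of (\ref{eq:pseudoconvex}) collapses the quadruple sum to
\[
\sum_{A,B,A',B'} 4\,\delta_{AB}\,\delta_{A'B'}\,\xi_{A'A}\overline{\xi_{B'B}} \;=\; 4 \sum_{A,A'} |\xi_{A'A}|^2,
\]
so the pseudoconvexity assumption holds (with equality) for $c = 4$.

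There is no genuine obstacle here; the only thing requiring real care is the bookkeeping of signs arising from the raising and lowering conventions (\ref{eq:epsilon})--(\ref{eq:varepsilon-anti}) and the conjugation formula (\ref{eq:overline}), since it is easy to introduce a spurious sign if one is not methodical. A clean write-up would keep the reduction to $n=1$ explicit and then simply tabulate the $4\times 4$ inner products.
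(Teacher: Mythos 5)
Your proposal is correct and takes essentially the same route as the paper: a direct computation resting on the constant coefficients and block structure of the $Z_{AA'}$'s, reducing the claim to a finite table of pairings that yields $4\delta_{AB}\delta_{A'B'}$, and then collapsing the quadruple sum in (\ref{eq:pseudoconvex}) to $4|\xi|^2$. The paper merely packages the same table via the auxiliary linear functions $z_{AA'}$ and Lemma \ref{lem:independent} ($Z_{AA'}z_{BB'}=2\delta_{AB}\delta_{A'B'}$) together with $\overline{Z_{AA'}}|x|^2=2z_{AA'}$, contracting with the $\varepsilon$'s at the end, whereas you raise indices first and tabulate directly.
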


   To prove this lemma, we introduce complex linear  functions
   \begin{equation}\label{eq:z-k-CF} (z_{AA'}):=\left(
                             \begin{array}{ll}
                               z_{00'  } &  {z}_{ 01' } \\
                               z_{10' } &  {z}_{11' } \\
                               \quad \vdots& \quad \vdots\\
                               z_{ (2l )0' } & {z}_{ (2l )1' } \\
                               z_{ (2l+1)0'  } &  {z}_{ (2l+1)1' } \\
                               \quad \vdots& \quad \vdots\\
                             \end{array}
                           \right):=\left(
                                      \begin{array}{ll}
                                    {x_1} -\textbf{i} {x_2}  & -  {x_3} +\textbf{i} {x_4}  \\
                                        {x_3}+\textbf{i} {x_4}  &\hskip 3mm  {x_1}+\textbf{i} {x_2}  \\
                                        \qquad \vdots&\qquad\vdots\\
                                        {x_{4l+1 }} -\textbf{i} {x_{4l+2}}  & - {x_{4l+3 }}+\textbf{i} {x_{4l+4} }  \\
                                       {x_{4l+3}} +\textbf{i} {x_{4l+4} }  &\hskip 3mm {x_{4l+1 }} +\textbf{i} {x_{4l+2}}  \\ \qquad \vdots&\qquad\vdots\\
                                      \end{array}
                                    \right),
\end{equation} where $A=0,\ldots, 2n-1$, $A'=0',1'$. $z_{AA'}$ is obtained by replacing $\partial_{x_j}$ in $\overline{Z_{AA'}}$ in (\ref{eq:k-CF}) by $x_j$.
By the following lemma, $z_{AA'}$'s can be viewed as independent variables and $Z_{AA'}$'s are derivatives with respect to these variables formally.
\begin{lem}\label{lem:independent} $Z_{AA'}z_{BB'}=2\delta_{AB}\delta_{A'B'}$.
\end{lem}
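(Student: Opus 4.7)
The plan is a direct verification from the definitions, organised via a block decomposition. First I would observe that from (\ref{eq:k-CF}) the two vector fields $Z_{2l,A'}$ and $Z_{(2l+1),A'}$ involve only $\partial_{x_{4l+1}},\ldots,\partial_{x_{4l+4}}$, and from (\ref{eq:z-k-CF}) the two functions $z_{2m,B'}$ and $z_{(2m+1),B'}$ involve only $x_{4m+1},\ldots,x_{4m+4}$. Consequently $Z_{AA'}z_{BB'}=0$ whenever $\lfloor A/2\rfloor\neq\lfloor B/2\rfloor$, which already settles the identity for all $(A,B)$ in different blocks, since then $\delta_{AB}=0$ too.

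It therefore suffices to fix a single block, say $l=m=0$, and check the sixteen products $Z_{AA'}z_{BB'}$ for $A,B\in\{0,1\}$ and $A',B'\in\{0',1'\}$. The four diagonal cases give
\begin{equation*}
Z_{00'}z_{00'}=(\partial_{x_1}+\mathbf{i}\partial_{x_2})(x_1-\mathbf{i}x_2)=1-\mathbf{i}^2=2,
\end{equation*}
and by inspection $Z_{01'}z_{01'}=Z_{10'}z_{10'}=Z_{11'}z_{11'}=2$, matching $2\delta_{AB}\delta_{A'B'}$ when $A=B$ and $A'=B'$. The twelve off-diagonal products split into two families. Either the vector field differentiates coordinates from the complementary pair of the block, so the result is $0$ immediately (e.g.\ $Z_{00'}z_{01'}=(\partial_{x_1}+\mathbf{i}\partial_{x_2})(-x_3+\mathbf{i}x_4)=0$); or the coordinates match but the complex coefficients cancel by $1+\mathbf{i}^2=0$ (e.g.\ $Z_{00'}z_{11'}=(\partial_{x_1}+\mathbf{i}\partial_{x_2})(x_1+\mathbf{i}x_2)=0$). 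In every case the outcome agrees with $2\delta_{AB}\delta_{A'B'}$.

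There is no genuine obstacle; the work is pure bookkeeping. The cleanest way to organise the sixteen checks is to notice that, by the very construction in (\ref{eq:z-k-CF}), $z_{AA'}$ is the ``symbol'' obtained from $\overline{Z_{AA'}}$ (see (\ref{eq:Z-raise})) by replacing each $\partial_{x_j}$ by $x_j$. Thus the computation inside one block reduces to verifying that the $2\times 2$ matrix of vector fields and the $2\times 2$ matrix of linear functions are dual, and this duality immediately produces the two Kronecker deltas on the right-hand side.
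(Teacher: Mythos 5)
Your proof is correct and follows essentially the same route as the paper: a direct verification that exploits the fact that each $Z_{AA'}$ only differentiates the four coordinates of its own block, so all cross-block pairings vanish, and then a computation of the remaining products within a block, where either the coordinates differ or the coefficients cancel via $1+\mathbf{i}^2=0$. The paper carries out exactly this check (one explicit nontrivial case plus the variable-independence observation, leaving the rest as ``similar''), so no further comparison is needed.
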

\begin{proof}  Assume that $A=2l ,A'=0'$. By (\ref{eq:z-k-CF}), we have\begin{equation*}\begin{aligned}Z_{(2l )0' }z_{(2l )0'}= (\partial_{x_{4l+1}}+\textbf{i}\partial_{x_{4l+2}})(x_{4l+1}-\textbf{i}x_{4l+2})=2;\\ Z_{(2l )0'}z_{(2l+1)1' }= (\partial_{x_{4l+1}}+\textbf{i}\partial_{x_{4l+2}})(x_{4l+1}+\textbf{i}x_{4l+2})=0.\end{aligned}\end{equation*}Note that $Z_{(2l )0' }$ is a differential operator with respect to variables $x_{4l+1}$ and $x_{4l+2}$, while $z_{BB' }$ for $BB'\neq {(2l )0' }$ or ${(2l+1)1' }$ is independent of variables $x_{4l+1 }$ and $x_{4l+2}$. So we get
\begin{equation*}
   Z_{(2l )0' }z_{BB'}=0
\end{equation*}
 for such $BB'$. It is similar to check the result directly for other vectors $ Z_{ (2l )1' } $, $    Z_{ (2l+1)0'  }$ and $ Z_{ (2l+1)1' }$.
\end{proof}

{\it Proof of Lemma \ref{lem:assumption}}.
 Note that  $(\partial_{x_j}\pm\mathbf{i}\partial_{x_k}) |x|^2 = 2({x_j}\pm\mathbf{i} {x_k})$. So $\overline{Z_{AC '}}|x|^2 = 2z_{AC '}$ by definitions of $\overline{Z_{AC '}}$'s and  $z_{AC '}$'s in   (\ref{eq:z-k-CF}).   Then    we have
  \begin{equation*}\begin{split}
   Z^{ A ' }_{ B} \overline{{Z^{B ' }_{ A}}} |x|^2 &= \sum_{D',C'}  Z_{     BD'} \overline{Z_{AC '}} |x|^2\cdot \varepsilon^{C'B'}\varepsilon^{D'A'}
   = 2\sum_{D',C'}  Z_{     BD'}  {z_{AC'}}  \cdot  \varepsilon^{C'B'}\varepsilon^{D'A'} \\&= 4\sum_{D',C'}  \delta_{   AB} \delta_{C'D'}  \cdot\varepsilon^{C'B'}\varepsilon^{D'A'}=4\delta_{   AB} \delta_{A'B'} ,
   \end{split}\end{equation*}by Lemma \ref{lem:independent}.  Here by (\ref{eq:varepsilon-anti}), $ \varepsilon^{C'B'}\varepsilon^{C'A'}=1$ only if $A'=B'$ and $C'$ is different from them. Otherwise, it vanishes. So for any $(\xi_{ A'A })\in \mathbb{C}^{2 } \otimes \mathbb{C}^{2 }$,
 \begin{equation*}\begin{split}
  \sum_{A,B,A',B'} Z^{ A ' }_{ B} \overline{{Z^{B ' }_{ A}}} |x|^2 \cdot \xi_{  A' A }   \overline{\xi_{ B' B }} & =4|\xi|^2 .
   \end{split}\end{equation*}

 \subsection{The associated Laplacian operator $\Box_\varphi$ }

By definition,
\begin{equation*}
   {\rm Dom} (\Box_{\varphi }):= \{ f\in L_\varphi^2( \mathbb{R}^{4n},  \mathscr  V_1  ); f\in {\rm Dom} ({\mathscr D}_0^*) \cap  {\rm Dom}  ({\mathscr D}_1), {\mathscr D}_0^* f\in {\rm Dom} ({\mathscr D}_0)  ,   {\mathscr D}_1 f\in {\rm Dom}  ({\mathscr D}_1^*)  \}.
 \end{equation*}
  We introduce   \begin{equation*}
      \mathcal{ E}_\varphi(  f,g ):= \left\langle {\mathscr D}_0^*  f  ,  {\mathscr D}_0^*  g  \right\rangle_{\varphi }   +  \left\langle {\mathscr D}_1  f  , {\mathscr D}_1   g   \right\rangle_{\varphi }
    \end{equation*}
    for any $f,g\in {\rm Dom}( \mathcal{ E}_\varphi) :={\rm Dom}  ({\mathscr D}_1)\cap {\rm Dom} ({\mathscr D}_0^*) $. By definition of adjoint operators, we have
    \begin{equation}\label{eq:E-box}
       \mathcal{ E}_\varphi(  f,g ) =\langle \Box_{\varphi } f,g \rangle_\varphi
    \end{equation}
    for   any $f\in {\rm Dom} (\Box_{\varphi }),g\in {\rm Dom}( \mathcal{ E}_\varphi) $.

Note that for any $F\in {\rm Dom} ({\mathscr D}_0)$, we have ${\mathscr D}_0 F\in {\rm Dom} ({\mathscr D}_1)$ and
 \begin{equation}\label{eq:D1D0}
    {\mathscr D}_1{\mathscr D}_0 F=0  .
 \end{equation}
 This is because ${\mathscr D}_1{\mathscr D}_0 F=0$ for smooth $F$ and the general result follows from the closedness of ${\mathscr D}_0$ and ${\mathscr D}_1$ as differential operators.
     \begin{prop}\label{prop:self-adjoint}
The associated Laplacian operator $\Box_\varphi$ is a densely-defined, closed, self-adjoint and non-negative operator on $L_\varphi^2( \mathbb{R}^{4n},   \mathscr V_1)$.
\end{prop}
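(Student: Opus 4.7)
The plan is to combine the Friedrichs--Kato representation theorem for closed non-negative sesquilinear forms with the coercive bound from Theorem \ref{thm:L2}. First, I would verify density of $\mathrm{Dom}(\Box_\varphi)$ by showing $C_0^\infty(\mathbb{R}^{4n}, \mathscr{V}_1) \subset \mathrm{Dom}(\Box_\varphi)$: for a compactly supported smooth $f$, the explicit formulas (\ref{eq:k-CF-}) and (\ref{eq:T*}) express $\mathscr{D}_1 f$ and $\mathscr{D}_0^* f$ as first-order differential operators with polynomial coefficients applied to $f$, so both outputs remain compactly supported and smooth, and in particular lie in $\mathrm{Dom}(\mathscr{D}_1^*)$ and $\mathrm{Dom}(\mathscr{D}_0)$ respectively. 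Symmetry and non-negativity of $\Box_\varphi$ on this domain are then immediate from (\ref{eq:E-box}): for $f, g \in \mathrm{Dom}(\Box_\varphi)$, $\langle \Box_\varphi f, g\rangle_\varphi = \mathcal{E}_\varphi(f, g)$ is Hermitian in $(f, g)$, and $\langle \Box_\varphi f, f\rangle_\varphi = \mathcal{E}_\varphi(f, f) \geq 0$.

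For self-adjointness and closedness I would apply the Friedrichs--Kato theorem to $\mathcal{E}_\varphi$ on its natural form domain $\mathrm{Dom}(\mathcal{E}_\varphi) = \mathrm{Dom}(\mathscr{D}_0^*) \cap \mathrm{Dom}(\mathscr{D}_1)$. The form is closed since $\mathscr{D}_0^*$ and $\mathscr{D}_1$ are both closed densely-defined differential operators (in the distributional sense, as remarked in the proof of the model-case estimate), so the graph norm $\bigl(\mathcal{E}_\varphi(\cdot,\cdot) + \|\cdot\|_\varphi^2\bigr)^{1/2}$ makes $\mathrm{Dom}(\mathcal{E}_\varphi)$ a Hilbert space. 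Theorem \ref{thm:L2} promotes non-negativity to the coercive lower bound $\mathcal{E}_\varphi(f, f) \geq 4\|f\|_\varphi^2$. The representation theorem then produces a unique self-adjoint non-negative operator $T$ on $L_\varphi^2(\mathbb{R}^{4n}, \mathscr{V}_1)$ with $\mathrm{Dom}(T) \subseteq \mathrm{Dom}(\mathcal{E}_\varphi)$ and $\mathcal{E}_\varphi(f, g) = \langle T f, g\rangle_\varphi$ for all $f \in \mathrm{Dom}(T)$, $g \in \mathrm{Dom}(\mathcal{E}_\varphi)$; moreover, by coercivity $T$ has a bounded inverse of norm at most $1/4$. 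The identity (\ref{eq:E-box}) gives immediately the inclusion $\Box_\varphi \subseteq T$.

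The main obstacle is to promote this to the equality $\Box_\varphi = T$, which forces self-adjointness and closedness of $\Box_\varphi$. Given $f \in \mathrm{Dom}(T)$, testing $\mathcal{E}_\varphi(f, g) = \langle Tf, g\rangle_\varphi$ against $g \in C_0^\infty$ yields the distributional identity $\mathscr{D}_0 \mathscr{D}_0^* f + \mathscr{D}_1^* \mathscr{D}_1 f = Tf$ in $L_\varphi^2$, but one must separate the two summands as individual $L_\varphi^2$ elements. To do this I would exploit the complex property $\mathscr{D}_1 \mathscr{D}_0 = 0$ recorded in (\ref{eq:D1D0}) together with the orthogonal decomposition $L_\varphi^2(\mathbb{R}^{4n}, \mathscr{V}_1) = \overline{\mathrm{Range}(\mathscr{D}_0)} \oplus \ker(\mathscr{D}_0^*)$: testing the form identity against $g = \mathscr{D}_0 h$ for $h \in C_0^\infty$ (so that $\mathscr{D}_1 g = 0$ by (\ref{eq:D1D0})) isolates the first summand, while testing against $g \in \ker(\mathscr{D}_0^*) \cap \mathrm{Dom}(\mathscr{D}_1)$ isolates the second; the coercive estimate then supplies $L_\varphi^2$ bounds for each summand separately. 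This yields $\mathscr{D}_0^* f \in \mathrm{Dom}(\mathscr{D}_0)$ and $\mathscr{D}_1 f \in \mathrm{Dom}(\mathscr{D}_1^*)$, hence $f \in \mathrm{Dom}(\Box_\varphi)$ with $\Box_\varphi f = Tf$, and the identification $\Box_\varphi = T$ is complete.
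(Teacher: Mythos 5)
Your proposal is correct in substance but follows a genuinely different route from the paper. The paper argues as in Chen--Shaw (Proposition 4.2.3): closedness of $\Box_\varphi$ is proved directly on sequences, using that $\langle\Box_\varphi g,g\rangle_\varphi=\|{\mathscr D}_0^*g\|_\varphi^2+\|{\mathscr D}_1g\|_\varphi^2$ and that ${\mathscr D}_0{\mathscr D}_0^*f_n\perp{\mathscr D}_1^*{\mathscr D}_1f_n$ (a consequence of ${\mathscr D}_1{\mathscr D}_0=0$) forces the two pieces of $\Box_\varphi f_n$ to converge separately; self-adjointness is then obtained from von Neumann's theorem by checking that $Q_1=(I+{\mathscr D}_0{\mathscr D}_0^*)^{-1}+(I+{\mathscr D}_1^*{\mathscr D}_1)^{-1}-I$ is a bounded self-adjoint inverse of $L_1=\Box_\varphi+I$. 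You instead invoke the Friedrichs--Kato representation theorem for the closed form $\mathcal E_\varphi$ and then identify the form operator $T$ with the operator sum $\Box_\varphi$; note that this identification is exactly where the work lies (in general the form sum is a proper extension of the operator sum), and both proofs ultimately lean on the same structural fact ${\mathscr D}_1{\mathscr D}_0=0$, yours through $\overline{\mathcal R({\mathscr D}_0)}\subset\ker{\mathscr D}_1$, the paper's through the resolvent identity for $Q_1$. Your route is arguably more conceptual and does not need the coercivity from Theorem \ref{thm:L2} (non-negativity of the form suffices for the proposition), while the paper's computation with $Q_1$ is more elementary and directly prepares the inverse $N_\varphi$ used in Theorem \ref{thm:canonical}.

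One step of your final paragraph should be made precise. Testing only against $g={\mathscr D}_0h$ with $h\in C_0^\infty$ is not enough to conclude ${\mathscr D}_0^*f\in{\rm Dom}({\mathscr D}_0)$: for that you must represent the functional $v\mapsto\langle{\mathscr D}_0^*f,{\mathscr D}_0^*v\rangle_\varphi$ by an $L^2_\varphi$ element against \emph{all} $v\in{\rm Dom}({\mathscr D}_0^*)$. The correct implementation of your own idea is: given $v\in{\rm Dom}({\mathscr D}_0^*)$, split $v=v_1+v_2$ with $v_1\in\overline{\mathcal R({\mathscr D}_0)}$ and $v_2\in\ker{\mathscr D}_0^*$; since ${\mathscr D}_1$ is closed and ${\mathscr D}_1{\mathscr D}_0=0$, one has $\overline{\mathcal R({\mathscr D}_0)}\subset\ker{\mathscr D}_1$, so $v_1\in{\rm Dom}(\mathcal E_\varphi)$ with ${\mathscr D}_1v_1=0$ and ${\mathscr D}_0^*v={\mathscr D}_0^*v_1$; testing the identity $\mathcal E_\varphi(f,\cdot)=\langle Tf,\cdot\rangle_\varphi$ against $v_1$ gives $\langle{\mathscr D}_0^*f,{\mathscr D}_0^*v\rangle_\varphi=\langle P_1Tf,v\rangle_\varphi$, where $P_1$ is the projection onto $\overline{\mathcal R({\mathscr D}_0)}$, hence ${\mathscr D}_0^*f\in{\rm Dom}({\mathscr D}_0)$ with ${\mathscr D}_0{\mathscr D}_0^*f=P_1Tf$; the symmetric argument with $v_2$ gives ${\mathscr D}_1f\in{\rm Dom}({\mathscr D}_1^*)$ and ${\mathscr D}_1^*{\mathscr D}_1f=(I-P_1)Tf$, so $\Box_\varphi f=Tf$. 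With this adjustment your identification $\Box_\varphi=T$ is complete and the proof goes through.
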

    \begin{proof} It is similar to the proof of proposition 4.2.3 of \cite{CS} for $\overline{\partial}$-complex. We give the proof here for completeness.

 As we mentioned before, ${\mathscr D}_0$ and ${\mathscr D}_0^*$ as differential operators are both densely-defined and closed.
 $\Box_\varphi$ is densely-defined  in the same way. For  closedness of $\Box_\varphi$, we need to show that for any $f_n\in {\rm Dom} (\Box_\varphi)$ such that $f_n\longrightarrow f$ in  $L_\varphi^2( \mathbb{R}^{4n},   \mathscr  V_1 )$ and $\Box_\varphi f_n$ converges, we have $f\in {\rm Dom} (\Box_\varphi)$ and $\Box_\varphi f_n \longrightarrow \Box_\varphi f$. Because $f_n\in {\rm Dom} (\Box_\varphi)$, we have
 \begin{equation*}\begin{split}
    \langle \Box_\varphi(f_n-f_m), f_n-f_m\rangle_\varphi&=\langle{\mathscr D}_0{\mathscr D}_0^*(f_n-f_m), f_n-f_m  \rangle_\varphi+\langle{\mathscr D}_1^*{\mathscr D}_1(f_n-f_m),f_n-f_m \rangle_\varphi\\&=\|{\mathscr D}_0^*(f_n-f_m)\|_\varphi^2+\|{\mathscr D}_1(f_n-f_m)\|_\varphi^2,
\end{split} \end{equation*}
 and so ${\mathscr D}_0^*f_n$ and ${\mathscr D}_1f_n$ converge in $L_\varphi^2( \mathbb{R}^{4n},  \mathscr V_0 )$ and $L_\varphi^2( \mathbb{R}^{4n},  \mathscr V_1)$, respectively.  It follows from the closedness of ${\mathscr D}_0^* $ and ${\mathscr D}_1$ that $f\in  {\rm Dom}  ({\mathscr D}_0^*) \cap  {\rm Dom}  ({\mathscr D}_1)$ and
\begin{equation*}
   {\mathscr D}_0^*f_n\longrightarrow {\mathscr D}_0^*f,\qquad  {\mathscr D}_1f_n\longrightarrow {\mathscr D}_1f.
\end{equation*}

 Note that ${\mathscr D}_0{\mathscr D}_0^* f_n$ and $ {\mathscr D}_1^*{\mathscr D}_1 f_n $ are orthogonal to each other by
 \begin{equation*}
    \langle{\mathscr D}_0{\mathscr D}_0^* f_n,{\mathscr D}_1^*{\mathscr D}_1 f_n\rangle_\varphi=\langle{\mathscr D}_1 {\mathscr D}_0{\mathscr D}_0^* f_n,{\mathscr D}_1 f_n\rangle_\varphi=0
 \end{equation*}  by (\ref{eq:D1D0}). So $\Box_\varphi f_n={\mathscr D}_0{\mathscr D}_0^* f_n+ {\mathscr D}_1^*{\mathscr D}_1 f_n$ converges implies that both ${\mathscr D}_0{\mathscr D}_0^* f_n$ and $ {\mathscr D}_1^*{\mathscr D}_1 f_n $ converge. It follows from the closedness of ${\mathscr D}_0 $ and ${\mathscr D}_1^*$ again that
 ${\mathscr D}_0^*f\in   {\rm Dom}  ({\mathscr D}_0)$, ${\mathscr D}_1 f\in ({\mathscr D}_1^*)  $ and
 \begin{equation*}
   {\mathscr D}_0{\mathscr D}_0^*f_n\longrightarrow {\mathscr D}_0{\mathscr D}_0^*f,\qquad {\mathscr D}_1^* {\mathscr D}_1f_n\longrightarrow{\mathscr D}_1^* {\mathscr D}_1f.
\end{equation*} Therefore  $f\in {\rm Dom} (\Box_\varphi)$ and $\Box_\varphi f_n\longrightarrow\Box_\varphi f $. So $\Box_\varphi$ is a closed operator.

  Define
\begin{equation}\label{eq:L1}
  L_1:={\mathscr D}_0{\mathscr D}_0^*  + {\mathscr D}_1^*{\mathscr D}_1+I\qquad {\rm on }\quad {\rm Dom} (\Box_\varphi).
\end{equation}
 It is sufficient to show  that $L_1^{-1}$ is  self-adjoint. By a theorem of Von Neumann (cf. \S 1 in Chapter 8 in \cite{RN}), $(I+{\mathscr D}_0{\mathscr D}_0^* )^{-1}$ and $(1+{\mathscr D}_1^*{\mathscr D}_1)^{-1}$ are automatically both bounded and self-adjoint, and so is
  \begin{equation*}
     Q_1=(I+{\mathscr D}_0{\mathscr D}_0^* )^{-1}+(I+{\mathscr D}_1^*{\mathscr D}_1)^{-1}-I .\end{equation*}
We   claim that $Q_1=L_1^{-1}$. Since
\begin{equation*}
   (1+{\mathscr D}_0{\mathscr D}_0^* )^{-1}-I=(I-(I+{\mathscr D}_0{\mathscr D}_0^*))(I+{\mathscr D}_0{\mathscr D}_0^* )^{-1}=-{\mathscr D}_0{\mathscr D}_0^* (I+{\mathscr D}_0{\mathscr D}_0^* )^{-1},
\end{equation*}we see that $\mathcal{R}(I+{\mathscr D}_0{\mathscr D}_0^* )^{-1}\subset {\rm Dom} ({\mathscr D}_0{\mathscr D}_0^*)$. Similarly, $\mathcal{R}(I+{\mathscr D}_1^*{\mathscr D}_1)^{-1}\subset {\rm Dom} ({\mathscr D}_1^*{\mathscr D}_1)$, and so
\begin{equation}\label{eq:Q1}
  Q_1=(I+{\mathscr D}_1^*{\mathscr D}_1)^{-1}-{\mathscr D}_0{\mathscr D}_0^* (I+{\mathscr D}_0{\mathscr D}_0^* )^{-1}.
\end{equation}
Since ${\mathscr D}_1{\mathscr D}_0=0$ by (\ref{eq:D1D0}), we have $\mathcal{R}(Q_1)\subset {\rm Dom} ({\mathscr D}_1^*{\mathscr D}_1)$ and ${\mathscr D}_1^*{\mathscr D}_1Q_1= {\mathscr D}_1^*{\mathscr D}_1 (I+{\mathscr D}_1^*{\mathscr D}_1)^{-1}$. Similarly $\mathcal{R}(Q_1)\subset {\rm Dom} ({\mathscr D}_0{\mathscr D}_0^*)$ and ${\mathscr D}_0{\mathscr D}_0^*Q_1= {\mathscr D}_0{\mathscr D}_0^*(I+{\mathscr D}_0{\mathscr D}_0^*)^{-1}$. Consequently, $\mathcal{R}(Q_1)\subset {\rm Dom} ( L_1)$ and
\begin{equation*}
   L_1 Q_1={\mathscr D}_1^*{\mathscr D}_1 (I+{\mathscr D}_1^*{\mathscr D}_1)^{-1}+{\mathscr D}_0{\mathscr D}_0^*(I+{\mathscr D}_0{\mathscr D}_0^*)^{-1}+Q_1=I
\end{equation*} by (\ref{eq:Q1}).
This together with the injectivity of $L_1$   implies that $ L_1^{-1}= Q_1$. Thus $L_1^{-1}$ is  self-adjoint. So   is its inverse $L_1$ (cf. \S 2 in Chapter 8 in \cite{RN} for this general property).       \end{proof}

 \subsection{The canonical solution operator}

   {\it Proof of Theorem \ref{thm:canonical}.} (1) The weighted $L^2$-estimate (\ref{eq:L2-n}) implies that
      \begin{equation*}
          4 \left \| g \right\|^2_{  \varphi  }  \leq       \left\| {\mathscr D}_0^*g  \right\|^2_{\varphi } + \left\|{\mathscr D}_1g \right\|^2_{\varphi }=( \Box_{\varphi } g,g )_\varphi\leq\left \| \Box_{\varphi } g \right\|_{  \varphi  }\left  \| g \right\|_{  \varphi  } ,
      \end{equation*} for $  g\in {\rm Dom} (\Box_{\varphi })$,  by (\ref{eq:E-box}), i.e.
      \begin{equation}\label{eq:box-estimate}
 4\left  \| g \right\|_{  \varphi  }\leq \left \| \Box_{\varphi } g\right\|_{  \varphi  }.
      \end{equation}
 Thus $\Box_{\varphi }$ is injective. This together with the self-adjointness of  $\Box_{\varphi }$    by Proposition \ref{prop:self-adjoint} implies the density of the  range (cf. \S 2 in Chapter 8 in \cite{RN} for this general  property). For   fixed $f\in  L_\varphi^2( \mathbb{R}^{4n}, \mathscr  V_1
)$,  the complex anti-linear functional
 \begin{equation*}
    \lambda_f:\Box_{\varphi } g \longrightarrow \langle f, g\rangle_\varphi\end{equation*}
  is then well-defined on a dense subset $\mathcal{R}(\Box_{\varphi })$ of $ L_\varphi^2( \mathbb{R}^{4n}, \mathscr  V_1
)$. It is finite since
 \begin{equation*}
 | \lambda_f(\Box_{\varphi } g )|=  |\langle f, g\rangle_\varphi|\leq \|f\|_\varphi\|g\|_\varphi\leq \frac 14  \|f\|_\varphi\|\Box_{\varphi } g\|_\varphi
 \end{equation*}
 for any $  g\in {\rm Dom} (\Box_{\varphi })$, by (\ref{eq:box-estimate}). So $\lambda_f$ can be uniquely extended a continuous anti-linear functional on $ L_\varphi^2( \mathbb{R}^{4n},\mathscr  V_1 )$. By the Riesz representation theorem, there exists a unique element $h\in  L_\varphi^2( \mathbb{R}^{4n}, \mathscr  V_1 )$ such that $\lambda_f(F)=\langle h, F\rangle_\varphi$ for any $F\in L_\varphi^2( \mathbb{R}^{4n}, \mathscr  V_1 )$,
 and $\|h\|_\varphi= |\lambda_f|\leq \frac 14  \|f\|_\varphi$. In particular, we have
 \begin{equation*}
 \langle h,\Box_{\varphi }g \rangle_\varphi =  \langle f, g\rangle_\varphi
 \end{equation*} for    any $g\in {\rm Dom} (\Box_{\varphi })$. This implies that $h \in {\rm Dom} (\Box_{\varphi }^*)$ and $\Box_{\varphi }^* h=f$, and so $h \in {\rm Dom} (\Box_{\varphi })$ and $\Box_{\varphi } h=f$ by self-adjointness of $\Box_{\varphi }$. We write $h=N_\varphi f$. Then $\left \|N_\varphi f \right\|_{  \varphi  }\leq\frac 14 \left \| f \right\|_{  \varphi  }$.

      (2)  Since $N_\varphi f\in {\rm Dom} (\Box_{\varphi })$, we have ${\mathscr D}_0^*N_\varphi f\in {\rm Dom} ({\mathscr D}_0)$, ${\mathscr D}_1  N_\varphi f\in {\rm Dom} ({\mathscr D}_1^*)$, and
      \begin{equation}\label{eq:N-f}
          {\mathscr D}_0{\mathscr D}_0^*N_\varphi f=f-  {\mathscr D}_1^*{\mathscr D}_1  N_\varphi f
      \end{equation}by $\Box_{\varphi } N_\varphi f=f$.
      Because $f$ and ${\mathscr D}_0 F$ for any $F\in {\rm Dom} ({\mathscr D}_0)$  are both  ${\mathscr D}_1$-closed, the above identity implies $ {\mathscr D}_1^*{\mathscr D}_1  N_\varphi f\in {\rm Dom} ({\mathscr D}_1)$   and  so
      $
{\mathscr D}_1 {\mathscr D}_1^*{\mathscr D}_1  N_\varphi f=0$  by   ${\mathscr D}_1$ acting in both sides. Then
    \begin{equation*}
     0=\langle  {\mathscr D}_1 {\mathscr D}_1^*{\mathscr D}_1  N_\varphi f, {\mathscr D}_1  N_\varphi f\rangle_\varphi=\left\|    {\mathscr D}_1^*{\mathscr D}_1  N_\varphi f\right\|_\varphi^2,
    \end{equation*}
    i.e. $
       {\mathscr D}_1^*  {\mathscr D}_1 N_\varphi f=0.
   $ Hence ${\mathscr D}_0 {\mathscr D}_0^*N_\varphi f =f$ by (\ref{eq:N-f}). Moreover, we have ${\mathscr D}_0^*N_\varphi f\perp A^2_{(k)}(\mathbb{R}^{4n},\varphi)$ since $( F,{\mathscr D}_0^*N_\varphi f )_\varphi=({\mathscr D}_0 F , N_\varphi f )_\varphi=0$ for any $ F\in A^2_{(k)}(\mathbb{R}^{4n},\varphi)$. The  estimate  (\ref{eq:canonical-est}) follows from
   \begin{equation*}
    \| {\mathscr D}_0^*N_\varphi f\|_\varphi^2+ \| {\mathscr D}_1 N_\varphi f\|_\varphi^2  =\langle \Box_{\varphi } N_\varphi f,N_\varphi f \rangle_\varphi\leq \frac 14   \|  f\|_\varphi^2.
   \end{equation*}

   \begin{cor}\label{cor:Bergman} The weighted $k$-Bergman projection formula  (\ref{eq:Bergman-proj}) holds.
\end{cor}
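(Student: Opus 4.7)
The plan is to derive the formula directly from Theorem \ref{thm:canonical}(2) by recognizing that $\mathscr{D}_0^*N_\varphi\mathscr{D}_0 f$ is precisely the canonical solution to the nonhomogeneous equation $\mathscr{D}_0 u = \mathscr{D}_0 f$, and then invoking uniqueness of orthogonal decomposition.

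First I would verify that $\mathscr{D}_0 f$ is an admissible right-hand side. Since $f\in {\rm Dom}(\mathscr{D}_0)$, the element $\mathscr{D}_0 f$ lies in $L^2_\varphi(\mathbb{R}^{4n},\mathscr{V}_1)$, and by (\ref{eq:D1D0}) it is $\mathscr{D}_1$-closed (indeed $\mathscr{D}_0 f\in {\rm Dom}(\mathscr{D}_1)$ with $\mathscr{D}_1\mathscr{D}_0 f = 0$). Hence Theorem \ref{thm:canonical}(2) applies with $\mathscr{D}_0 f$ in place of $f$: setting $u := \mathscr{D}_0^* N_\varphi\mathscr{D}_0 f$, we obtain
\begin{equation*}
\mathscr{D}_0 u = \mathscr{D}_0 f, \qquad u \perp A^2_{(k)}(\mathbb{R}^{4n},\varphi),
\end{equation*}
together with the $L^2$-bound $\|u\|_\varphi \leq \tfrac 12\|\mathscr{D}_0 f\|_\varphi$.

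Next I would use this to recover the projection. Define $g := f - u$; then $\mathscr{D}_0 g = \mathscr{D}_0 f - \mathscr{D}_0 u = 0$, so $g\in A^2_{(k)}(\mathbb{R}^{4n},\varphi)$. Thus $f = g + u$ expresses $f$ as the sum of an element of $A^2_{(k)}(\mathbb{R}^{4n},\varphi)$ and an element orthogonal to it. Since $A^2_{(k)}(\mathbb{R}^{4n},\varphi)$ is a closed subspace (it is the kernel of the closed operator $\mathscr{D}_0$), this orthogonal decomposition is unique, so $Pf = g = f - \mathscr{D}_0^* N_\varphi\mathscr{D}_0 f$, which is (\ref{eq:Bergman-proj}).

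No part of the argument is really an obstacle; the only thing to be careful about is making the orthogonality $\langle u, h\rangle_\varphi = 0$ for $h\in A^2_{(k)}$ explicit, which one may alternatively check directly via $\langle \mathscr{D}_0^* N_\varphi\mathscr{D}_0 f, h\rangle_\varphi = \langle N_\varphi\mathscr{D}_0 f, \mathscr{D}_0 h\rangle_\varphi = 0$ since $h\in{\rm Dom}(\mathscr{D}_0)$ with $\mathscr{D}_0 h = 0$. The rest is a one-line invocation of Theorem \ref{thm:canonical} and the uniqueness of orthogonal projection onto a closed subspace.
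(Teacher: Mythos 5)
Your argument is correct and is essentially the paper's own proof: both apply Theorem \ref{thm:canonical} to the automatically ${\mathscr D}_1$-closed datum ${\mathscr D}_0 f$, note that the canonical solution ${\mathscr D}_0^*N_\varphi{\mathscr D}_0 f$ is orthogonal to $A^2_{(k)}(\mathbb{R}^{4n},\varphi)$ while $f-{\mathscr D}_0^*N_\varphi{\mathscr D}_0 f$ lies in it, and conclude by uniqueness of the orthogonal decomposition. Your extra explicit check of the orthogonality is a fine (if redundant) addition.
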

\begin{proof} For $f\in{\rm Dom} ({\mathscr D}_0 ) $, ${\mathscr D}_0 f$ is  automatically ${\mathscr D}_1$-closed.  Apply Theorem \ref{thm:canonical} to ${\mathscr D}_0 f$ to get the canonical solution ${\mathscr D}_0^*N_\varphi{\mathscr D}_0f$   orthogonal to $A^2_{(k)}(\mathbb{R}^{4n},\varphi)$. So $f-{\mathscr D}_0^*N_\varphi{\mathscr D}_0f\in A^2_{(k)}(\mathbb{R}^{4n},\varphi)$ by ${\mathscr D}_0(f-{\mathscr D}_0^*N_\varphi{\mathscr D}_0f)=0$, and is exactly the projection of $f$ to the weighted $k$-Bergman space.
\end{proof}

\begin{rem}
As in \cite{Wa10}, we can use Theorem \ref{thm:canonical} to get compactly supported solution to
the nonhomogeneous     $k$-Cauchy-Fueter equation (\ref{eq:w-kCF})   for ${\mathscr D}_1$-closed  $f\in C_0^1 \left(\mathbb{R}^{4n},  \mathscr V_1  \right)  $, which implies   Hartogs' phenomenon  for
  $k$-regular functions.
  \end{rem}

      \section{Decay of canonical solutions and    the weighted $k$-Bergman   kernel }
       \subsection{The   weighted $k$-Bergman projection and   kernel}
For $f\in L_\varphi^2( \Omega,  \mathscr  V_0  )$,
      it has $k+1$ independent components $f_{0'0'\ldots 0'0'}, f_{1'0'\ldots 0'0'}
,\hdots,f_{1'1'\ldots 1'1'}$. We write
\begin{equation}\label{eq:Dk}
    f=\left(\begin{array}{c}f_{0'0'\ldots 0'0'}\\f_{1'0'\ldots 0'0'}
\\ \vdots \\f_{1'1'\ldots 1'1'}
\end{array}
\right)=\left(\begin{array}{c}f_{0 }\\f_{1 }
\\\vdots\\f_{k }
\end{array}
\right),
\end{equation}
where $f_{j }:=f_{1' \ldots 1' 0'\ldots   0'}$ with $j$ indices  to be  $1' $.

Note that for a sequence of $k$-regular functions $F_n\in L_\varphi^2( \Omega,  \mathscr  V_0 )$ (i.e. ${\mathscr D}_0F_n=0$), if $F_n\longrightarrow F$ in $L_\varphi^2( \Omega,  \mathscr  V_0 )$, we have ${\mathscr D}_0F =0$ by the closedness of ${\mathscr D}_0$. So  $A^2_{(k)}(\mathbb{R}^{4n}, \varphi)$ is a closed   subspace of
$L_\varphi^2( \Omega,  \mathscr  V_0 )$.  If $\{\psi_{\alpha}\}$ is an orthonormal basis of the space   $A^2_{(k)}(\mathbb{R}^{4n}, \varphi)$,
        the weighted $k$-Bergman projection $P$ can be write as
      $
         Pf=\sum_\alpha \langle f,\psi_{\alpha}\rangle_\varphi \psi_{\alpha}.
       $

      \begin{prop} \label{prop:harmonic}
       If $f\in L_\varphi^2( \Omega,  \odot^{k
}\mathbb{C}^{2 } )$ is   $k$-regular, then each component of $f$ is harmonic.
      \end{prop}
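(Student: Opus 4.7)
The plan is to show, via a purely algebraic identity, that the Laplacian $\Delta=\sum_{j=1}^{4n}\partial_{x_j}^2$ annihilates each component of $f$ distributionally, and then invoke hypoellipticity of $\Delta$ (Weyl's lemma) to upgrade this to classical harmonicity.

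The key input is the constant-coefficient identity
\begin{equation*}
\sum_{A=0}^{2n-1}\overline{Z_{AA'}}\,Z_{AB'}=\delta_{A'B'}\,\Delta,\qquad A',B'\in\{0',1'\}.
\end{equation*}
I would verify this block by block in the variables $x_{4l+1},x_{4l+2},x_{4l+3},x_{4l+4}$ for $l=0,\ldots,n-1$. In the diagonal case $A'=B'$, the pair of terms indexed by $A=2l$ and $A=2l+1$ assembles into the $4$-variable Laplacian of that block via the standard factorization $(\partial_x\pm\mathbf{i}\partial_y)(\partial_x\mp\mathbf{i}\partial_y)=\partial_x^2+\partial_y^2$; summing over $l$ yields the full Laplacian. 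In the off-diagonal case $A'\neq B'$, the two contributions share a common first-order left factor and cancel, e.g.
\begin{equation*}
\overline{Z_{(2l)0'}}Z_{(2l)1'}+\overline{Z_{(2l+1)0'}}Z_{(2l+1)1'}=(\partial_{x_{4l+1}}-\mathbf{i}\partial_{x_{4l+2}})\bigl[(-\partial_{x_{4l+3}}-\mathbf{i}\partial_{x_{4l+4}})+(\partial_{x_{4l+3}}+\mathbf{i}\partial_{x_{4l+4}})\bigr]=0.
\end{equation*}

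With this identity in hand, I would rewrite ${\mathscr D}_0 f=0$ componentwise using $Z_A^{0'}=Z_{A1'}$ and $Z_A^{1'}=-Z_{A0'}$ from (\ref{eq:k-CF-raised}): for every $A\in\{0,\ldots,2n-1\}$ and every multi-index $\alpha=(A_2',\ldots,A_k')$,
\begin{equation*}
Z_{A1'}f_{0'\alpha}-Z_{A0'}f_{1'\alpha}=0
\end{equation*}
in the distributional sense. Applying $\overline{Z_{A1'}}$ to both sides and summing over $A$ produces $\Delta f_{0'\alpha}$ on the left by the diagonal case of the identity, and zero on the right by the off-diagonal case; hence $\Delta f_{0'\alpha}=0$. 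Applying $\overline{Z_{A0'}}$ and summing over $A$ yields $\Delta f_{1'\alpha}=0$ in the same way. Since $\alpha$ ranges over all $(k-1)$-tuples of primed indices and $f$ is totally symmetric in its primed indices, this covers every component $f_j$.

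Finally, Weyl's lemma applied to $\Delta$ promotes each distributional Laplace solution to a smooth (in fact real-analytic) harmonic function, completing the proof. The only nontrivial step is the block-by-block verification of the cancellation identity, but this is an elementary direct computation in the $2\times 2$ complex matrix representation of quaternions that originally motivated the definition of the $Z_{AA'}$'s; no real obstacle is anticipated.
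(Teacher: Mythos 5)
Your proof is correct, and it rests on the same mechanism as the paper's: composing ${\mathscr D}_0$ with its conjugate(-transpose) turns $k$-regularity into componentwise harmonicity. The difference is one of self-containedness. The paper's proof is a one-line appeal to the matrix identity $\overline{\mathscr D_0}^{t}\mathscr D_0 ={\rm diag}(\triangle,2\triangle,\ldots,2\triangle,\triangle)$, cited from lemma 3.3 of \cite{WR}, whereas you derive the underlying scalar identity
\begin{equation*}
\sum_{A=0}^{2n-1}\overline{Z_{AA'}}\,Z_{AB'}=\delta_{A'B'}\,\triangle
\end{equation*}
from scratch; your block-by-block verification checks out against the definition (\ref{eq:k-CF}) (the diagonal blocks give $\partial_{x_{4l+1}}^2+\cdots+\partial_{x_{4l+4}}^2$, the off-diagonal ones cancel because the constant-coefficient factors commute), and applying $\overline{Z_{A1'}}$, respectively $\overline{Z_{A0'}}$, to the component equations $Z_{A1'}f_{0'\alpha}-Z_{A0'}f_{1'\alpha}=0$ and summing over $A$ does yield $\triangle f_{0'\alpha}=0$ and $\triangle f_{1'\alpha}=0$ for every tuple $\alpha$, which exhausts all components. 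You also make explicit a point the paper leaves implicit: since $f$ is merely in $L^2_\varphi$ and ${\mathscr D}_0 f=0$ holds distributionally, one needs Weyl's lemma (hypoellipticity of $\triangle$) to conclude that the components are genuinely smooth harmonic functions — this step is genuinely needed for the later mean-value arguments, so including it is a small improvement rather than a redundancy. The only cosmetic slip is the phrase ``zero on the right'': both terms sit on the left of the equation, with the second one killed by the off-diagonal cancellation; the mathematics is unaffected.
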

      \begin{proof} It follows from
     \begin{equation}\label{eq:D0D0-k}
 \overline{\mathscr   D_0}^{ t} \mathscr   D_0 f = \left(\begin{array}{cccc c} \triangle&
0& \cdots&0
&0 \\0&
2\triangle&
\cdots&0
&0 \\
\vdots  &\vdots &\vdots  & \vdots & \vdots
\\0&0
&
\cdots&2\triangle
&0 \\0&0
&
\cdots&0&\triangle
\end{array}\right)\left(\begin{array}{c}f_{0 }\\f_{1 }
\\\vdots\\f_{k }
\end{array}
\right)
\end{equation}
 where $      \Delta := \partial_{x_{1}}^2+ \partial_{x_{ 2}}^2+
\cdots+ \partial_{x_{ 4n}}^2 $. See lemma 3.3 of \cite{WR} for this identity.
      \end{proof}

By Proposition \ref{prop:harmonic},   each component of  a $k$-regular function  is smooth. So for a fixed point $x\in \mathbb{R}^{4n}$,
we can define  complex  linear functionals
 \begin{equation*}
   l_j(f)=f_j(x)
 \end{equation*} for $f\in A^2_{(k)}(\mathbb{R}^{4n}, \varphi)$, $j=0,\ldots k$. Since $f_j$ is harmonic by Proposition \ref{prop:harmonic}, we see that
 \begin{equation}\label{eq:mean-value}
    |f_j(x)|=\left|\frac 1{|B(x,1)|}\int_{B(x,1)}f_j(y)dV(y)\right|\leq \frac 1{|B(x,1)|}\|f\|_\varphi\left(\int_{B(x,1)}e^{2\varphi(y)}dV(y)\right)^{\frac 12}\leq C_x\|f\|_\varphi,
 \end{equation} where $C_x$ only depends on $x$, not on $f$. Consequently,  linear functionals $l_j$ are bounded on $A^2_{(k)}(\mathbb{R}^{4n}, \varphi)$. By the Riesz representation theorem, there exists $K_j(\cdot,x)\in A^2_{(k)}(\mathbb{R}^{4n}, \varphi)$ such that
 \begin{equation*}
   f_j(x)=\langle f, K_j(\cdot,x)\rangle_\varphi=\sum_{l=0}^{k}\int_{\mathbb{R}^{4n}}f_l(y)\overline{K_{jl}(y,x)}e^{-2\varphi}dV.
 \end{equation*} It is obvious that  $\langle g, K_j(\cdot,x)\rangle_\varphi=0$ for any $g\perp A^2_{(k)}(\mathbb{R}^{4n}, \varphi)$.
So  $K(x,y)=\left(\overline{K_{jl}(y, x)}\right)$ is the kernel of the weighted $k$-Bergman projection $P$, which is
   a $(k+1)\times(k+1)$ matrix  anti-$k$-regular in $y$.
 Then the integral formula (\ref{eq:Bergman-kernel}) holds.  Since an orthogonal projection $P$ is self-adjoint on $ L_\varphi^2( \mathbb{R}^{4n}, \mathscr  V_0
)$, $K$ has the Hermitian
property $K(x,y) = \overline{K(y, x)}^t$, and so
 $ K(x,y)$ is $k$-regular in $x$.

  \subsection{A  localized a priori estimate and   Caccioppoli-type estimate}It is known that
        the    Caccioppoli-type estimate   holds for many systems of PDEs of the divergence form by establishing
  localized  a priori estimate of the following type.
      \begin{prop} \label{prop:localized-L^2}  There exists an absolute constant $C_0>0$ such that for any $f\in {\rm Dom } (\Box_\varphi)$ and real bounded  Lipschitzian function $\eta$, we have estimates
         \begin{equation}\label{eq:localized-L^2}\begin{split}
       \left\|\eta {\mathscr D}_1 f \right\|_{\varphi }^2 + \left\|\eta {\mathscr D}_0^*  f  \right\|_{\varphi }^2 &\leq   C_0\left ( \left\||d\eta|\cdot f\right\|_{\varphi }^2 + |\left\langle \eta^2     f , \Box_\varphi  f\right\rangle_{\varphi }|\right),
   \\
      \mathcal{ E}_\varphi(  \eta f, \eta f ) &\leq   C_0\left ( \left\||d\eta|\cdot f\right\|_{\varphi }^2 +| \left\langle \eta^2     f , \Box_\varphi  f\right\rangle_{\varphi }|\right),
 \end{split}  \end{equation} where $|d\eta|^2=\sum_{j=1}^{4n} |\frac {\partial \eta}{\partial x_j}|^2$.
      \end{prop}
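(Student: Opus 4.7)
The plan is to derive both inequalities from a single integration-by-parts identity applied to $\langle\eta^{2}f,\Box_{\varphi}f\rangle_{\varphi}$, with the error terms controlled by commutators of $\eta^{2}$ with the first-order operators ${\mathscr D}_{0}^{*}$ and ${\mathscr D}_{1}$. First I would check that $\eta f$ and $\eta^{2}f$ both lie in $\mathrm{Dom}({\mathscr D}_{0}^{*})\cap\mathrm{Dom}({\mathscr D}_{1})=\mathrm{Dom}(\mathcal{E}_{\varphi})$. This is true because $\eta$ is bounded (so these stay in $L^{2}_{\varphi}$) and Lipschitz (so the distributional derivatives obtained from the product rule are in $L^{2}_{\varphi}$, since $|d\eta|$ is bounded a.e. and $f\in L^{2}_{\varphi}$).

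Next I would apply the defining adjoint relations to write
\begin{equation*}
\bigl\langle \eta^{2}f,\Box_{\varphi}f\bigr\rangle_{\varphi}
=\bigl\langle {\mathscr D}_{0}^{*}(\eta^{2}f),{\mathscr D}_{0}^{*}f\bigr\rangle_{\varphi}
+\bigl\langle {\mathscr D}_{1}(\eta^{2}f),{\mathscr D}_{1}f\bigr\rangle_{\varphi}.
\end{equation*}
Using the explicit formulas (\ref{eq:T*}) for ${\mathscr D}_{0}^{*}$ (with $\delta^{A}_{A'}=Z^{A}_{A'}-2Z^{A}_{A'}\varphi$) and (\ref{eq:k-CF-}) for ${\mathscr D}_{1}$, the product rule gives
\begin{equation*}
{\mathscr D}_{0}^{*}(\eta^{2}f)=\eta^{2}{\mathscr D}_{0}^{*}f+R_{0}(d\eta,f),\qquad
{\mathscr D}_{1}(\eta^{2}f)=\eta^{2}{\mathscr D}_{1}f+R_{1}(d\eta,f),
\end{equation*}
where $R_{0},R_{1}$ are pointwise bilinear expressions of the schematic form $\eta(Z^{A'}_{A}\eta)\,f$ (the weight-dependent $-2Z^{A}_{A'}\varphi$ part of $\delta^{A}_{A'}$ is a multiplication operator and commutes with $\eta^{2}$, so it contributes nothing). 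In particular $|R_{j}|\le C_{k,n}\,\eta\,|d\eta|\,|f|$. Substituting yields
\begin{equation*}
\|\eta {\mathscr D}_{0}^{*}f\|_{\varphi}^{2}+\|\eta {\mathscr D}_{1}f\|_{\varphi}^{2}
=\langle\eta^{2}f,\Box_{\varphi}f\rangle_{\varphi}
-\langle R_{0},{\mathscr D}_{0}^{*}f\rangle_{\varphi}
-\langle R_{1},{\mathscr D}_{1}f\rangle_{\varphi},
\end{equation*}
and the last two cross-terms are absorbed by Cauchy--Schwarz with a small parameter $\varepsilon$: $2|\langle R_{j},\cdot\rangle_{\varphi}|\le\varepsilon\|\eta{\mathscr D}_{j}f\|_{\varphi}^{2}+C\varepsilon^{-1}\||d\eta|f\|_{\varphi}^{2}$. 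Choosing $\varepsilon=\tfrac12$ and absorbing produces the first inequality in (\ref{eq:localized-L^2}).

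For the second inequality I would expand $\mathcal{E}_{\varphi}(\eta f,\eta f)=\|{\mathscr D}_{0}^{*}(\eta f)\|_{\varphi}^{2}+\|{\mathscr D}_{1}(\eta f)\|_{\varphi}^{2}$ by the same product rule (with $\eta$ in place of $\eta^{2}$) and apply $(a+b)^{2}\le 2a^{2}+2b^{2}$ to obtain
\begin{equation*}
\mathcal{E}_{\varphi}(\eta f,\eta f)\le 2\bigl(\|\eta{\mathscr D}_{0}^{*}f\|_{\varphi}^{2}+\|\eta{\mathscr D}_{1}f\|_{\varphi}^{2}\bigr)+C\||d\eta|f\|_{\varphi}^{2},
\end{equation*}
and then invoke the first inequality to finish. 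The only nontrivial step is the bookkeeping for the product rule for ${\mathscr D}_{0}^{*}$, which involves the symmetrisation in (\ref{eq:T*}) and the index-raising conventions; once this is written out the remainder is Cauchy--Schwarz and absorption. I do not anticipate any genuine obstacle: the argument is the standard Caccioppoli/integration-by-parts scheme that works for any first-order divergence-form system, with $\eta$ playing the role of a cutoff whose derivative costs the zeroth-order weight $|d\eta|$.
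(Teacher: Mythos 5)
Your proposal is correct and follows essentially the same cutoff/commutator scheme as the paper's proof: the product rule for ${\mathscr D}_0^*$ and ${\mathscr D}_1$ against a bounded Lipschitz $\eta$ (noting the weight part of $\delta^A_{A'}$ commutes with multiplication), the Cauchy--Schwarz bound of the symmetrised gradient terms by $\||d\eta|\cdot f\|_\varphi$, and absorption with a small parameter. The only organizational difference is that you pair $\eta^2 f$ directly with $\Box_\varphi f$ in one identity and then deduce $\mathcal{E}_\varphi(\eta f,\eta f)$ from it, whereas the paper first proves the comparison for $\eta$ and estimates the pieces $\langle \eta^2 f,{\mathscr D}_0{\mathscr D}_0^* f\rangle_\varphi$ and $\langle \eta^2 f,{\mathscr D}_1^*{\mathscr D}_1 f\rangle_\varphi$ separately; your arrangement in fact yields the stated bound in terms of $|\langle \eta^2 f,\Box_\varphi f\rangle_\varphi|$ slightly more directly.
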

      \begin{proof}
       Note that
        \begin{equation*}\label{eq:delta-eta-f}
         \delta_{A_1' }^{ A}  (\eta f_{  A_2'\ldots A_k'  A  })=\eta\delta_{ A_1' }^{ A}   f_{ A_2'\ldots A_k'  A } +  Z_{ A_1' }^{ A}  \eta \cdot f_{ A_2'\ldots A_k' A }  \end{equation*}
by     $\delta_{ A_1 ' }^{ A}=Z_{ A_1' }^{ A}-2Z_{ A_1' }^{ A}\varphi$ in (\ref{eq:adjoint-delta00}). Then taking summation over $A$ and symmetrising $(A_1'\ldots A_k' )$, we get
       \begin{equation}\label{eq:D0*-eta-f}
  [{\mathscr D}_0^* (\eta f)]_{    A_1'\ldots A_k'  }=\eta [{\mathscr D}_0^*  (f)]_{    A_1'\ldots A_k'  } +\sum_{A=0 }^{2n-1} Z_{ ( A_1 ' }^{ A}\eta \cdot f_{  A_2'\ldots A_k')   A  }.
\end{equation}
  On the other hand, for fixed $A_1'\ldots A_k'$, we have
 \begin{equation}\label{eq:Z-eta-f}\begin{split}
   \left| \sum_{A } Z_{ ( A_1 ' }^{ A}\eta \cdot f_{ A_2'\ldots A_k') A }\right|&=\frac 1k \left| \sum_{s=1}^k \sum_{A }
   Z_{ A_s ' }^{ A}  \eta \cdot f_{  \ldots A_1'\ldots A_k' A}\right|\\&\leq \frac 1k   \sum_{s=1}^k  \left (\sum_{A }\left |Z_{ A_s ' }^{ A}\eta\right |^2\right)^{\frac 12}     \left (\sum_{A } |f_{A_1' \ldots \widehat{A}_s'\ldots A_k' A  }|^2\right)^{\frac 12},
    \end{split} \end{equation} by using (\ref{eq:sym-1}) and Cauchy-Schwarz inequality and $f$ symmetric in the primed indices.
    Note that it directly follows from definition (\ref{eq:k-CF}) of $Z_{ A A  ' }$'s that
    \begin{equation*}
     \sum_{A=0 }^{2n-1}\left |Z_{ A A  ' } \eta\right |^2=  |d\eta|^2
    \end{equation*} for fixed $A'=0'$ or $1'$. Then by raising indices, we get
    \begin{equation*}
       \sum_{A=0 }^{2n-1}\left |Z_{ 0 ' }^A \eta\right |^2= \sum_{A=0 }^{2n-1}\left |\overline{Z^{ 0 ' }_A }\eta\right |^2=  \sum_{A=0 }^{2n-1}\left |  Z_{ A 1 ' }\eta\right |^2=  |d\eta|^2,
    \end{equation*}
       and so is the sum of $  |Z_{ 1 ' }^A \eta  |^2$. Apply these to (\ref{eq:Z-eta-f}) to get
    \begin{equation}\label{eq:sum-Z-eta-f}
    \sum_{  A_1', \ldots,A_k'}  \left\| \sum_{A } Z_{ ( A_1 ' }^{ A}\eta \cdot f_{ A_2'\ldots A_k') A }\right\|^2_\varphi \leq 2 \| |d\eta|\cdot f \|^2_\varphi.
    \end{equation}
  Thus we get the estimate
\begin{equation*}
  \| {\mathscr D}_0^*  (\eta f)\|^2_\varphi\leq \| \eta {\mathscr D}_0  (f)\|^2_\varphi +  2 \| |d\eta|\cdot f \|^2_\varphi,
\end{equation*} by (\ref{eq:D0*-eta-f}),  and simultaneously,
    \begin{equation}\label{eq:caciopoli1}\begin{split}
         \left\|\eta {\mathscr D}_0^* f \right\|_{\varphi }^2& \leq    \left\| {\mathscr D}_0^*(\eta f )\right\|_{\varphi }^2+2\left\||d\eta|\cdot f\right\|_{\varphi }^2.
    \end{split} \end{equation}

  Note that by (\ref{eq:D0*-eta-f}) again,  we get
     \begin{equation*}\begin{split}
       \left\| {\mathscr D}_0^*(\eta f )\right\|_{\varphi }^2 &
        = \sum_{    A_1'\ldots A_k'  } \left( {\mathscr D}_0^*(\eta f )_{    A_1'\ldots A_k'  } ,\sum_{A } Z_{ ( A_1 ' }^{ A}\eta \cdot f_{ A_2'\ldots A_k') A  }  + \eta ( {\mathscr D}_0^*  f)_{    A_1'\ldots A_k'  } \right)_{\varphi }\\&
        =\sum_{    A_1'\ldots A_k'  } \left( {\mathscr D}_0^*(\eta f )_{    A_1'\ldots A_k'  } ,\sum_{A } Z_{ ( A_1 ' }^{ A}\eta \cdot f_{ A_2'\ldots A_k') A }  \right)_{\varphi }+ \left\langle {\mathscr D}_0^*(\eta f ) , \eta {\mathscr D}_0^*  f \right\rangle_{\varphi }
        \\&
        \leq  \kappa \left\| {\mathscr D}_0^*(\eta f )\right\|_{\varphi }^2 +\frac 1{ \kappa}   \left\||d\eta|\cdot f\right\|_{\varphi }^2+  \left\langle \eta f  ,{\mathscr D}_0( \eta {\mathscr D}_0^*  f) \right\rangle_{\varphi }
    \end{split} \end{equation*} by using estimates (\ref{eq:Z-eta-f})-(\ref{eq:sum-Z-eta-f}) and  the trivial inequality $2|ab|\leq \kappa |a|^2+ \frac 1{ \kappa} |b|^2$ for any $\kappa>0$.
    Thus if we choose $\kappa=1/2$, we get
    \begin{equation}\label{eq:caciopoli2}\begin{split}
         \left\| {\mathscr D}_0^*(\eta f )\right\|_{\varphi }^2 &
                \leq    4  \left\||d\eta|\cdot f\right\|_{\varphi }^2+  2\langle  \eta f  , {\mathscr D}_0 (\eta {\mathscr D}_0^*  f)   \rangle_{\varphi }  .
    \end{split} \end{equation}   But
        \begin{equation}\label{eq:caciopoli2'}\begin{split}
           |\langle  \eta f  , {\mathscr D}_0 (\eta {\mathscr D}_0^*  f)   \rangle_{\varphi } | & \leq  | \langle \eta f  , \eta {\mathscr D}_0  {\mathscr D}_0^*  f    \rangle_{\varphi } |+ \sum_{ A ,  A_2',\ldots, A_k'  }\left |\left(  \eta f_{ A_2'\ldots A_k' A } , \sum_{    A_1'} Z^{   A_1 ' }_{ A}\eta \cdot ({\mathscr D}_0^*f)_{   A_1'\ldots A_k'   }    \right)_{\varphi }\right|\\&
           \leq   |\langle \eta^2  f  ,   {\mathscr D}_0  {\mathscr D}_0^*  f    \rangle_{\varphi } |+ \sum_{   A_1',\ldots, A_k'  }\sum_{    A }\left |\left( \overline{Z^{   A_1 ' }_{ A} \eta }f_{ A_2'\ldots A_k' A } ,\eta  ({\mathscr D}_0^*f)_{   A_1'\ldots A_k'   }    \right)_{\varphi }\right|\\&
           \leq   | \langle \eta^2  f  ,   {\mathscr D}_0  {\mathscr D}_0^*  f    \rangle_{\varphi } | +\frac 1{ \kappa} \left\||d\eta|\cdot f\right\|_{\varphi }^2+\kappa\left\|\eta {\mathscr D}_0^*  f \right\|_{\varphi }^2
    \end{split} \end{equation}
   by applying estimates similar to (\ref{eq:Z-eta-f})-(\ref{eq:sum-Z-eta-f}) in the third inequality. Now
    Substitute
   (\ref{eq:caciopoli2'}) to (\ref{eq:caciopoli2}) and using (\ref{eq:caciopoli1}) to control the term $\kappa\left\|\eta {\mathscr D}_0^*  f \right\|_{\varphi }^2$, we find that there exists a constant $C_0>0$ such that
   \begin{equation*} \begin{split}
         \left\|{\mathscr D}_0^*(\eta f ) \right\|_{\varphi }^2
                 \leq   C_0 \left ( \left\||d\eta|\cdot f\right\|_{\varphi }^2 + |\left\langle\eta^2     f , {\mathscr D}_0 {\mathscr D}_0^*  f\right\rangle_{\varphi }|\right).
    \end{split} \end{equation*}
  Similarly,
 \begin{equation*}
   {\mathscr D}_1  (\eta f)_{ A B  A_2'\ldots A_k'  }=\eta ({\mathscr D}_1   f)_{ AB   A_2'\ldots A_k'  } +2\sum_{  A_1'=0',1' } Z^{ A_1 '}_{  [A}\eta \cdot f_{ B]  A_1'\ldots A_k'   }
\end{equation*}by definition,
and so
     \begin{equation*} \begin{split} &\| {\mathscr D}_1  (\eta f)\|^2_\varphi\leq\| \eta {\mathscr D}_1  (f)\|^2_\varphi +  4n \| |d\eta|\cdot f \|^2_\varphi,\\&
         \left\|\eta {\mathscr D}_1  f  \right\|_{\varphi }^2
                 \leq   C_0\left ( \left\||d\eta|\cdot f\right\|_{\varphi }^2 + |\left\langle \eta^2     f , {\mathscr D}_1^*{\mathscr D}_1  f\right\rangle_{\varphi }|\right).
    \end{split} \end{equation*} The result  follows.
      \end{proof}

As a corollary, we get Caccioppoli-type estimate.
   \begin{prop}\label{prop:Caccioppoli} Suppose that $\varphi(x)=|x|^2$. If $\Box_\varphi F=0$ on $B(x,R)\subset \mathbb{R}^{4n}$, then for $r<R$, we have
   \begin{equation*}
     \int_{B(x,r)} |{\mathscr D}_0^*   F |^2e^{-2\varphi }dV
                 \leq    \frac C{(R-r)^2} \int_{B(x,R)}  | F  |^2e^{-2\varphi }dV
   \end{equation*} for some constant $C$ only depending on $n$, $k$, $R$ and $r$.
         \end{prop}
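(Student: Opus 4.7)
The proof will be a direct application of Proposition \ref{prop:localized-L^2}, exploiting the freedom to choose the cut-off function $\eta$. The plan is to pick $\eta\in C_0^\infty(B(x,R))$ with $0\leq\eta\leq 1$, $\eta\equiv 1$ on $B(x,r)$, and $|d\eta|\leq C/(R-r)$, using a standard mollified distance function. With this choice, $\mathrm{supp}(\eta^2)\subset B(x,R)$, so the hypothesis $\Box_\varphi F =0$ on $B(x,R)$ forces the inner product term to vanish:
\begin{equation*}
  \langle \eta^2 F,\Box_\varphi F\rangle_\varphi = \int_{B(x,R)} \eta^2 \langle F,\Box_\varphi F\rangle e^{-2\varphi}\,dV = 0.
\end{equation*}
Then the first inequality in (\ref{eq:localized-L^2}) gives
\begin{equation*}
  \|\eta\,{\mathscr D}_0^* F\|_\varphi^2 \leq C_0\,\||d\eta|\cdot F\|_\varphi^2 \leq \frac{C_0 C^2}{(R-r)^2}\int_{B(x,R)} |F|^2 e^{-2\varphi}\,dV.
\end{equation*}
Since $\eta\equiv 1$ on $B(x,r)$, the left side dominates $\int_{B(x,r)}|{\mathscr D}_0^* F|^2 e^{-2\varphi}\,dV$, yielding the Caccioppoli estimate with constant depending only on $n,k,R,r$.

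The one point of care is that Proposition \ref{prop:localized-L^2} requires $F\in \mathrm{Dom}(\Box_\varphi)$, whereas the hypothesis here is only that $\Box_\varphi F=0$ holds on the ball $B(x,R)$. This is resolved by standard means: $\Box_\varphi$ is a second-order elliptic system (as seen from (\ref{eq:D0D0-k}) modulo lower-order terms coming from $\varphi=|x|^2$), so by elliptic regularity $F$ is smooth on $B(x,R)$, and the integration-by-parts identities underlying the proof of Proposition \ref{prop:localized-L^2} apply verbatim to smooth $F$ paired against the compactly supported $\eta^2 F$. Alternatively, one can truncate $F$ globally with a larger cut-off and pass to the limit; either way the local vanishing of $\Box_\varphi F$ is what we need.

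There is no real obstacle: the whole argument is one line once (\ref{eq:localized-L^2}) is available, and the main input, namely the localized a priori estimate, has already been proved. The Caccioppoli inequality is thus really a corollary of Proposition \ref{prop:localized-L^2}, and this is how I would present it.
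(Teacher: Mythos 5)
Your proposal is correct and follows essentially the same route as the paper: choose $\eta\in C_0^\infty(B(x,R))$ with $\eta\equiv 1$ on $B(x,r)$ and $|d\eta|\lesssim (R-r)^{-1}$, note that $\langle\eta^2 F,\Box_\varphi F\rangle_\varphi=0$ since $\Box_\varphi F=0$ on $\operatorname{supp}\eta$, and apply the first inequality of Proposition \ref{prop:localized-L^2}. Your extra care about $F\in{\rm Dom}(\Box_\varphi)$ is a reasonable refinement but not needed for how the paper uses the result, since it is applied only to $F=N_\varphi f$, which lies in ${\rm Dom}(\Box_\varphi)$ by construction.
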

         \begin{proof} Let $\eta$ be a $C_0^\infty(B(x,R))$ function such that $\eta\equiv 1$ on $B(x,r)$.  By the localized  a priori estimate (\ref{eq:localized-L^2})
    in Proposition \ref{prop:localized-L^2}, we get
            \begin{equation*} \begin{split}
         \left\|\chi_{B(x,r)}  {\mathscr D}_0^*   F \right\|_{\varphi }^2&\leq  \left\| \eta {\mathscr D}_0^*   F \right\|_{\varphi }^2
                 \leq   C_0\left ( \left\||d\eta|\cdot  F\right\|_{\varphi }^2 + |\left\langle\eta^2      F , \Box_\varphi F\right\rangle_{\varphi }|\right)=  C_0 \left\| d \eta\right\|_{\infty }^2  \left\|\chi_{B(x,R)}\cdot  F\right\|_{\varphi }^2
    \end{split} \end{equation*}
    since $\Box_\varphi  F=0$ on supp$\,\eta$ and $d\eta$ is supported in $B(x,R)$. The result follows by choosing $\eta$.
         \end{proof}

        \subsection{Decay of canonical solutions and    the weighted $k$-Bergman   kernel}
\begin{thm} \label{thm:decay0}
   Suppose that   $\varphi(x)=|x|^2$,  $k=2,3,\ldots$,  and that  $f\in L_\varphi^2( \mathbb{R}^{4n}, \mathscr  V_1
)$ is compactly supported in $B(y,r_0)$. Then the   canonical solution $ u={\mathscr D}_0^*N_\varphi f$   has the following pointwise estimate: there exists $\varepsilon>0$
   only depending on $r_0$  and constant $C >0$ only depending on $n$, $k$ and $\varepsilon$ such that
   \begin{equation}\label{eq:decay0}
     |u(x)|\leq C e^{ |x|^2+\frac \varepsilon 2|x|-\varepsilon|x-y|}\|f\|_\varphi
   \end{equation}
   for any $x$ such that $|x-y|>r_0+2$.
\end{thm}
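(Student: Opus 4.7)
\smallskip
\textbf{Proof plan.} Set $F:=N_\varphi f$, so that $\Box_\varphi F=f$ and $u=\mathscr{D}_0^*F$. The plan is to combine three ingredients: (i) an Agmon-type weighted $L^2$ bound on $F$ obtained by feeding an exponential weight into Proposition \ref{prop:localized-L^2} and Theorem \ref{thm:L2}, (ii) the Caccioppoli estimate of Proposition \ref{prop:Caccioppoli} to move from the global weighted bound to an $L^2$-bound on a ball disjoint from the support of $f$, and (iii) the mean-value inequality for harmonic functions (via Proposition \ref{prop:harmonic}) to convert the $L^2$ bound into a pointwise estimate.

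Choose the auxiliary Lipschitz weight $\psi(x):=\varepsilon\sqrt{|x-y|^2+1}$, for $\varepsilon>0$ small to be fixed. It satisfies $\psi\geq\varepsilon|x-y|$ and $|\nabla\psi|\leq\varepsilon$ pointwise, and $\psi\leq\varepsilon(r_0+1)$ on $\mathrm{supp}\,f\subset B(y,r_0)$. The key step is to establish
\begin{equation*}
\|e^{\psi}F\|_\varphi\le C(r_0)\|f\|_\varphi.
\end{equation*}
For this I set $\psi_M:=\min(\psi,M)$ and $\eta_M:=e^{\psi_M}$, a real bounded Lipschitz function with $|d\eta_M|\le\varepsilon\eta_M$. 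Applying Proposition \ref{prop:localized-L^2} with $\eta_M$ gives
\begin{equation*}
\|\eta_M\mathscr{D}_0^*F\|_\varphi^2+\|\eta_M\mathscr{D}_1F\|_\varphi^2\le C_0\bigl(\varepsilon^2\|\eta_MF\|_\varphi^2+\|\eta_MF\|_\varphi\|\eta_Mf\|_\varphi\bigr),
\end{equation*}
while applying Theorem \ref{thm:L2} to $G:=\eta_MF\in\mathrm{Dom}(\mathscr{D}_0^*)\cap\mathrm{Dom}(\mathscr{D}_1)$ and expanding $\mathscr{D}_0^*G$ and $\mathscr{D}_1G$ by the Leibniz rule (\ref{eq:D0*-eta-f}) and its $\mathscr{D}_1$-analogue gives
\begin{equation*}
4\|\eta_MF\|_\varphi^2\le 2\|\eta_M\mathscr{D}_0^*F\|_\varphi^2+2\|\eta_M\mathscr{D}_1F\|_\varphi^2+C\varepsilon^2\|\eta_MF\|_\varphi^2.
\end{equation*}
Combining the two displays and choosing $\varepsilon$ small (depending on $n,k$) to absorb the $\varepsilon^2$ contributions yields $\|\eta_MF\|_\varphi\le C\|\eta_Mf\|_\varphi\le Ce^{\varepsilon(r_0+1)}\|f\|_\varphi$; the desired inequality then follows by monotone convergence as $M\to\infty$.

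For the pointwise step, fix $x$ with $|x-y|>r_0+2$ and take $\delta:=\varepsilon/8$, $R:=\delta+1/2$, so that $B(x,R)\subset\mathbb{R}^{4n}\setminus\overline{B(y,r_0)}$ and $\Box_\varphi F=0$ there. Proposition \ref{prop:Caccioppoli} gives
\begin{equation*}
\int_{B(x,\delta)}|u|^2e^{-2\varphi}\,dV=\int_{B(x,\delta)}|\mathscr{D}_0^*F|^2e^{-2\varphi}\,dV\le C_R\int_{B(x,R)}|F|^2e^{-2\varphi}\,dV,
\end{equation*}
and since $\psi\ge\varepsilon(|x-y|-R)$ on $B(x,R)$, the Agmon step bounds this by $C'e^{-2\varepsilon|x-y|}\|f\|_\varphi^2$. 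In the intended application $f$ is $\mathscr{D}_1$-closed (as it is for Theorem \ref{thm:decay}), so Theorem \ref{thm:canonical}(2) gives $\mathscr{D}_0u=f=0$ on $B(x,\delta)$ and each component of $u$ is harmonic there by Proposition \ref{prop:harmonic}. The $L^2$ mean-value inequality together with the estimate $e^{2\varphi(z)}\le e^{2|x|^2+\varepsilon|x|/2+O(1)}$ on $B(x,\delta)$ then converts the weighted integral bound into (\ref{eq:decay0}), with $\varepsilon/4$ in place of $\varepsilon/2$; relabeling $\varepsilon$ finishes the proof.

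The main obstacle is the Agmon step. Both the pairing coming from $\Box_\varphi F=f$ and the Leibniz expansion of $\mathscr{D}_0^*(\eta_MF)$, $\mathscr{D}_1(\eta_MF)$ produce commutator terms involving $\nabla\psi_M$, and these must be absorbed simultaneously into the coercive constant $4$ from Theorem \ref{thm:L2}; this forces $\varepsilon$ to be small in a way depending only on $n,k$. The truncation $\psi_M$ is needed to keep $\eta_MF$ inside the relevant domains, but once the bound uniform in $M$ is obtained the passage to the limit, and the subsequent Caccioppoli and mean-value manipulations, are routine.
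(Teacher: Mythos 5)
Your proposal is correct and takes essentially the same route as the paper's proof: an Agmon-type exponentially weighted $L^2$ bound on $N_\varphi f$ obtained by feeding a Lipschitz exponential weight into Proposition \ref{prop:localized-L^2} together with the estimate of Theorem \ref{thm:L2}, followed by the Caccioppoli estimate of Proposition \ref{prop:Caccioppoli} and the mean-value inequality for the harmonic components of $u$. The only differences are cosmetic: the paper multiplies by a cutoff vanishing on $B(y,r_0)$ and truncates the weight as $e^{\varepsilon\min\{|x'-y|,|x-y|\}}$ so that the $\Box_\varphi$-pairing term vanishes outright, whereas you keep a global weight $e^{\psi_M}$ and absorb $\langle \eta_M^2 F, f\rangle$ by Cauchy--Schwarz, and you make explicit the $\mathscr{D}_1$-closedness of $f$ (automatic in the application to Theorem \ref{thm:decay}) that the paper uses implicitly when asserting $\mathscr{D}_0 u=f$.
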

    \begin{proof}
         For the canonical solution $u={\mathscr D}_0^*N_\varphi f$, we have ${\mathscr D}_0 u=f$  vanishing  outside of $B(y,r_0)$. Consequently, each component of $u$ is   harmonic   outside of $B(y,r_0)$ by Proposition \ref{prop:harmonic}.
         By the mean value formula for harmonic functions,    we get
         \begin{equation}\label{eq:mean}\begin{split}
           | u(x)|^2 &= \left|\frac 1{|B(x,\delta)|}\int_{B(x,\delta)} u(x') dV \right|^2
           \\&\leq \frac 1{|B(x,\delta)|^2} \int_{B(x,\delta)}  \left|u(x')\right|^2 e^{-2|x'|^2 }dV(x')
           \cdot\int_{B(x,\delta)}  e^{2 |x'|^2 }dV(x')
          \\&\leq C_\delta' e^{2|x|^2+4\delta|x|}\int_{B(x,1)}| N_\varphi f(x')  |^2e^{-2|x'|^2 }dV(x')
         \end{split}   \end{equation} for some   constant $C_\delta'>0$ only depending on $n$, $\delta<1$ and any $x$ such that $|x-y|>r_0+1$. Here in the last inequality       we
  apply    Caccioppoli-type estimate in Proposition \ref{prop:Caccioppoli} to $F= N_\varphi f$ with $\Box_\varphi N_\varphi f =f=0$ outside of $B(y,r_0)$, and
 $  e^{  |x'|^2 }\leq    e^{  |x |^2+ 2\delta|x|+\delta^2 }$ for $x'\in B(x,\delta)$. We choose $\delta=\frac \varepsilon 4$ for $\varepsilon$ determined later.

 For fixed $x$ outside of $B(y,r_0)$, consider the  Lipschitzian function
  \begin{equation*}
    b(x'):=\min \{ |x'-y|, |x-y|\}.
      \end{equation*}   Let $l:[0,\infty)\rightarrow [0,1]$  be the Lipschitzian function vanishing on $[0, r_0]$, equal to $1$ on $[r_0+1,\infty)$, and affine in between. Set $\eta(x')=l(|x'-y|)$. Applying weighted $L^2$  estimate (\ref{eq:L2-n}) and the localized  a priori  estimate
    in Proposition \ref{prop:localized-L^2} to $ N_\varphi f$ with $\eta$ replaced by $\eta e^{\varepsilon b}$, we get
      \begin{equation*}\begin{split}
         \int_{\mathbb{R}^{4n}}|\eta e^{\varepsilon b}N_\varphi f(x')  |^2e^{-2\varphi }dV(x')&\leq  \mathcal{ E}_\varphi( \eta e^{\varepsilon b} N_\varphi f ,  \eta e^{\varepsilon b} N_\varphi f )\\&
        \leq   C_0 \left\||d(\eta e^{\varepsilon b})|\cdot N_\varphi f\right\|_{\varphi }^2 + C_0 \left( \eta^2 e^{2\varepsilon b}     N_\varphi f , \Box_\varphi  N_\varphi f\right)_{\varphi } \\& \leq   C_0 \int_{\mathbb{R}^{4n}}\left(||d\eta| e^{\varepsilon b}N_\varphi f(x')  |^2  +    (4n \varepsilon)^2  |\eta e^{\varepsilon b}N_\varphi f(x')  |^2\right)e^{-2\varphi }dV(x')
       \end{split}  \end{equation*}since the  Lipschitzian  constant of $b$ is $1$ and   $\Box_\varphi N_\varphi f =f=0$ on the support of $\eta$ ($=B(y,r_0)^c$).
       Hence if we choose $\varepsilon$ sufficiently small (e.g. $ C_0  (4n \varepsilon)^2\leq \frac 12 $), we get
         \begin{equation*}\begin{split}
         \int_{\mathbb{R}^{4n}}|\eta e^{\varepsilon b}N_\varphi f(x')  |^2e^{-2\varphi }dV(x')&\leq   2 C_0 \int_{\mathbb{R}^{4n}}||d\eta| e^{\varepsilon b}N_\varphi f(x')  |^2e^{-2\varphi }dV(x')\\&\leq    C '' \int_{B(y,r_0+1)}| N_\varphi f(x')  |^2e^{-2\varphi }dV(x')
       \end{split}  \end{equation*} for some    constant $C''>0$, by $d\eta$ supported in $B(y,r_0+1)$ and $b $  uniformly bounded on  $B(y,r_0+1)$ ($|b(x')|<r_0+1$). But   $b(x')\geq |x-y|-1 $ for  $x'\in B(x,1)$, and so the above estimate implies that
                \begin{equation*}\begin{split}
         \int_{B(x,1)}|N_\varphi f(x')  |^2e^{-2\varphi }dV(x')&\leq     C'' e^{-2\varepsilon (|y-x|-1)} \int_{B(y,r_0+1)}| N_\varphi f (x')  |^2e^{-2\varphi }dV(x').
       \end{split}  \end{equation*} Substituting this into (\ref{eq:mean}), we get the result by the boundedness of $N_\varphi$ on $L_\varphi^2( \mathbb{R}^{4n},  \mathscr V_1 )$  by Theorem \ref{thm:canonical} (1). \end{proof}

 {\it Proof of Theorem \ref{thm:decay}}.  For fixed $y\in  \mathbb{R}^{4n}$, let $\eta_y$ be a smooth radial function supported in the ball $B(y,\delta)$ ($\delta<1$) such that  $\int \eta_y( y')dV(y')=1$. Set
              \begin{equation}\label{eq:fx}f_y(y')=\left(
                             \begin{array}{c}
                              \vdots\\ 0\\ \eta_y(y')e^{ 2|y'|^2}\\ 0\\ \vdots
                             \end{array}
                           \right)\in L_\varphi^2( {\mathbb R}^{4n},  \mathscr V_0 )
\end{equation} for fixed $j $, where only $j$-th entry is nonvanishing.
Note that
       \begin{equation*}
          Pf_y (x )=\int_{\mathbb{R}^{4n}}K(x ,y')f_y (y')e^{-2|y'|^2 }dV(y')=\int_{\mathbb{R}^{4n}}K(x,y')\left(
                             \begin{array}{c}
                              \vdots\\ 0\\ \eta_y(y') \\ 0\\ \vdots
                             \end{array}
                           \right)dV(y')=
          \left(\begin{array}{l}
            K(x ,y)_{0j} \\ \qquad\vdots\\ K(x ,y)_{{k }j}
\end{array}\right)
       \end{equation*} by applying the  mean value formula for  harmonic functions  to
   each component  of $K(x,\cdot)$, since $\eta_y(\cdot)$ is constant on each sphere centered at $y$. Hence the $j$-th column of $(k+1)\times(k+1)$-matrix $K$ is
       \begin{equation*}
           \left(\begin{array}{l}
            K(x ,y)_{0j} \\ \qquad \vdots\\ K(x ,y)_{{k }j}
\end{array}\right)=Pf_y (x )=f_y(x )-({\mathscr D}_0^*N_\varphi {\mathscr D}_0f_y)(x ),
       \end{equation*}by the identity (\ref{eq:Bergman-proj}).
The exponential decay   of the canonical solution in Theorem \ref{thm:decay0} implies that there exists a   constant $C >0$ only depending on $\varepsilon,   n,k$ such that
       \begin{equation*}
       \left  |({\mathscr D}_0^*N_\varphi {\mathscr D}_0f_y)(x)\right|\leq C e^{|x|^2+\frac \varepsilon 2 |x|-\varepsilon|x-y|}\|{\mathscr D}_0f_y\|_\varphi
       \end{equation*}  for any $x$ such that $|x-y|>3$,
 since $   {\mathscr D}_0   {\mathscr D}_0^*N_\varphi {\mathscr D}_0f_y={\mathscr D}_0f_y$ is   supported in $B(y,1)$. Note that
 $ |{\mathscr D}_0f_y(y')|\leq C_3 e^{2|y'|^2}(|y'|+1)\chi_{B(y,\delta)}$ for some    constant $C_3 >0$ depending on $n,\delta$,  by direct differentiation (\ref{eq:fx}). It is direct to check that  $\|{\mathscr D}_0f_y\|_\varphi\leq C_4 e^{|y|^2+ 5\delta|y|}$ for some    constant $C_4 >0$ depending on $n,\delta$. The result follows by choose small $\delta$.

\begin{rem} \label{rem:decay}Our estimate (\ref{eq:decay})  has an extra factor $e^{\frac \varepsilon 2(|x|+|y|)}$ compared to the estimate
   \begin{equation*}
     |K(x,y)|\leq C e^{|x|^2+ |y|^2 -\varepsilon|x-y|},
   \end{equation*}for the Bergmann kernel  in   complex analysis. But when $|y|$ is large compared to $|x|$, e.g. $| y|\geq 4|x|$,
   \begin{equation*}
      |K(x,y)|\leq Ce^{|x|^2+ |y|^2 -\frac \varepsilon 8| y|},
   \end{equation*} which has similar exponential decay with respect to the measure $e^{-|y|^2}dV$ as in the complex case.
\end{rem}

\end{document}